\documentclass[english, 10pt]{amsart}

\usepackage{tikz}
\usepackage{aliascnt}
\usepackage{amsfonts}
\usepackage{mathrsfs}
\usepackage{amsthm}
\usepackage{amsmath,amssymb}
\usepackage[all,poly,knot]{xy}
\usepackage{amsfonts}
\usepackage{color}
\usepackage{hyperref}
\usepackage{comment}  
\usepackage{csquotes}

\theoremstyle{plain}
\newtheorem{thmx}{Theorem} 

\newtheorem{thm}{Theorem}[section]  

\newtheorem{cor}[thm]{{Corollary}} 

\newtheorem{lem}[thm]{{Lemma}}

\newtheorem{prop}[thm]{Proposition}

\newtheorem{ques}[thm]{{Question}}

\newtheorem{defi}[thm]{Definition}

\theoremstyle{remark}
\newtheorem{rmk}[thm]{Remark}

\numberwithin{equation}{section}

\def\Spec{\mathrm{Spec}}

\def\log{\mathrm{log}\,}

\def\Spec{\mathrm{Spec}\,}

%abbr. by Deng

%\setlength{\parindent}{0em}
\usepackage[top=1.5in, bottom=1.5in, left=1in, right=1in]{geometry}

 \usepackage[hyperpageref]{backref}
 
\usepackage{xcolor}
\hypersetup{
	colorlinks,
	linkcolor={red!50!black},
	citecolor={blue!50!black},
	urlcolor={blue!80!black}
}
\begin{document} 
\title[Non-archimedean Borel Hyperbolicity]{Non-archimedean hyperbolicity of the moduli space of curves} 

	\author{Ruiran Sun}
		 \address{Institut fur Mathematik, Universit\"at Mainz, Mainz, 55099, Germany}
	\email{ruirasun@uni-mainz.de}

\begin{abstract}
Let $K$ be a complete algebraically closed non-archimedean valued field of characteristic zero, and let $X$ be a finite type scheme over $K$. We say $X$ is $K$-analytically Borel hyperbolic if, for every finite type reduced scheme $S$ over $K$, every rigid analytic morphism from the rigid analytification $S^{\mathrm{an}}$ of $S$ to the rigid analytification $X^{\mathrm{an}}$ of $X$ is algebraic. Using the Viehweg-Zuo construction and the $K$-analytic big Picard theorem of Cherry-Ru, we show that, for $N \geq 3$ and $g \geq 2$, the fine moduli space $\mathcal{M}^{[N]}_{g,K}$ over $K$ of genus $g$ curves with level $N$-structure is $K$-analytically Borel hyperbolic. 
\end{abstract}

\subjclass[2010]{32Q45, 32A22, 30G06}
\keywords{rigid analytic varieties, hyperbolicity, Higgs bundles, moduli of polarized varieties}

\maketitle
%\tableofcontents 

\section{Introduction}
Motivated by conjectures of Green-Griffiths-Lang and higher dimensional generalizations of the Shafarevich problem, there has recently been much work on different notions of hyperbolicity \cite{Lan86, Java20} and the verification of them on the moduli spaces of smooth projective varieties \cite{KovacsLieblich, CP, JSZ}. In this paper, we study the non-archimedean analogue of Borel hyperbolicity introduced in \cite{Java-Kuch}, and verify it for $\mathcal{M}^{[N]}_{g,K}$, the moduli space of genus $g$ curves with level $N$-structure over a non-archimedean field $K$, where $g>1$ and $N \geq 3$.\\
Inspired by Cherry's work \cite{Cherry94}, in \cite{Java-Vezz} Javanpeykar and Vezzani introduced the non-archimedean analogue of Brody hyperbolicity. Let $X$ be a finite type scheme over $K$, where $K$ is a complete algebraically closed non-archimedean valued field of characteristic zero. Then $X$ is \emph{$K$-analytically Brody hyperbolic} if, for every finite type connected group scheme $G$ over $K$, every morphism $G^{\mathrm{an}} \to X^{\mathrm{an}}$ of rigid analytic varieties is constant, where $G^{\mathrm{an}}$ and $X^{\mathrm{an}}$ denote the rigid analytification of $G$ and $X$ respectively. In the  aforementioned paper they also proved that $\mathcal{A}^{[N]}_{g,K}$, the fine moduli space of $g$-dimensional principally polarized abelian schemes with level $N$-structure ( $N \geq 3$ ) over $K$, is $K$-analytically Brody hyperbolic if the residue field of $K$ has characteristic $0$. As a direct corollary of the Torelli theorem, the same statement holds for $\mathcal{M}^{[N]}_{g,K}$. In this paper we extend the latter result by proving the hyperbolicity of $\mathcal{M}^{[N]}_{g,K}$ for more general $K$ ( e.g., $K =\mathbb{C}_p$ ).
\begin{thmx}\label{Cor-B}
The variety $\mathcal{M}^{[N]}_{g,K}$ is $K$-analytically Brody hyperbolic for $N \geq 3$ and $g \geq 2$.  
\end{thmx}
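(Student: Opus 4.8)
The plan is to derive the statement from the $K$-analytic Borel hyperbolicity of $\mathcal{M}^{[N]}_{g,K}$ announced in the abstract, so that the proof splits into a ``transfer to algebra'' step and a purely algebraic constancy step. Let $G$ be a finite type connected group scheme over $K$ and let $f\colon G^{\mathrm{an}} \to \mathcal{M}^{[N],\mathrm{an}}_{g,K}$ be a morphism of rigid analytic varieties. First I would observe that, since $K$ has characteristic zero, Cartier's theorem guarantees that $G$ is smooth, hence reduced and of finite type over $K$; thus $G$ is an admissible source in the definition of Borel hyperbolicity. Applying that property with $S = G$ produces a morphism of $K$-schemes $\bar f\colon G \to \mathcal{M}^{[N]}_{g,K}$ with $\bar f^{\mathrm{an}} = f$, and the problem is reduced to showing that the algebraic morphism $\bar f$ is constant.

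For the constancy of $\bar f$ I would first descend to $\C$. The morphism $\bar f$ is defined over a subfield $k_0 \subseteq K$ finitely generated over $\Q$; fixing an embedding $k_0 \hookrightarrow \C$ and base changing yields $\bar f_{\C}\colon G_{\C} \to \mathcal{M}^{[N]}_{g,\C}$, where $G_\C$ is again a connected complex algebraic group. Since constancy of a morphism of finite type $k_0$-schemes can be tested after the faithfully flat base change $k_0 \hookrightarrow \C$, it suffices to prove that $\bar f_\C$ is constant.

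Over $\C$ I would exploit hyperbolicity through the Torelli morphism and the bounded symmetric domain uniformization. The Torelli morphism $t\colon \mathcal{M}^{[N]}_{g,\C} \to \mathcal{A}^{[N]}_{g,\C}$ is quasi-finite (injective on points by the Torelli theorem, with finite fibers), which lets one transfer Brody hyperbolicity: on complex points $\mathcal{A}^{[N]}_g(\C) = \mathbb{H}_g/\Gamma(N)$, and since $N \geq 3$ the congruence group $\Gamma(N)$ is torsion free, so the Siegel upper half space $\mathbb{H}_g$, which is biholomorphic to a bounded domain, is the universal cover. Every entire map $\C \to \mathcal{A}^{[N]}_g(\C)$ lifts to $\mathbb{H}_g$ and is constant by Liouville; composing an entire curve in $\mathcal{M}^{[N]}_g(\C)$ with $t$ then forces it into a finite fibre, hence to be constant, so $\mathcal{M}^{[N]}_{g,\C}$ is Brody hyperbolic. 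To conclude, I would precompose $\bar f_\C$ with the holomorphic exponential $\exp\colon \mathrm{Lie}(G_\C) = \C^n \to G(\C)$: each complex line through the origin maps to an entire curve in $\mathcal{M}^{[N]}_g(\C)$ and is therefore constant, so $\bar f_\C \circ \exp$ is constant on $\C^n$; as $\exp$ has open image near the identity and $G_\C$ is irreducible, $\bar f_\C$ is constant.

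The genuinely hard input is not this corollary but the Borel hyperbolicity on which it rests, whose proof combines the Viehweg-Zuo Higgs bundle construction with the Cherry-Ru $K$-analytic big Picard theorem; within the present argument the only delicate points are verifying that $G$ is an allowed source (handled by Cartier's theorem) and the field descent from $K$ to $\C$ (handled by spreading out over a finitely generated subfield). Alternatively, one could bypass Borel hyperbolicity and argue directly: reduce via the structure theory of connected group schemes to morphisms out of $\mathbb{G}_m^{\mathrm{an}}$ and out of abelian varieties, and apply the big Picard theorem to the positively twisted cotangent data supplied by Viehweg-Zuo to force such morphisms to be constant.
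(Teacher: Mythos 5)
Your proposal is correct, and it rests on the same two pillars as the paper's own proof --- the $K$-analytic Borel hyperbolicity of Theorem~\ref{thm-A} to algebraize rigid analytic maps, followed by spreading out over a finitely generated subfield, embedding into $\mathbb{C}$, and invoking hyperbolicity of $\mathcal{M}^{[N]}_{g,\mathbb{C}}$ --- but your reduction to those pillars is genuinely different. The paper never treats an arbitrary connected group scheme directly: it cites the characterization of $K$-analytic Brody hyperbolicity in \cite[Theorem~2.18]{Java-Vezz}, which reduces the statement to two special cases, namely that rigid analytic maps $\mathbb{G}^{\mathrm{an}}_{m,K}\to \mathcal{M}^{[N],\mathrm{an}}_{g,K}$ are constant and that \emph{algebraic} morphisms $B\to \mathcal{M}^{[N]}_{g,K}$ from abelian varieties with good reduction are constant; Borel hyperbolicity is then used only in the $\mathbb{G}_m$ case, the abelian case being algebraic from the start. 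You instead handle every finite type connected group scheme $G$ uniformly: Cartier's theorem makes $G$ smooth, hence reduced and an admissible source for Definition~\ref{K-Borel-defi}, Theorem~\ref{thm-A} algebraizes the map, and over $\mathbb{C}$ you dispose of the resulting algebraic map via lines through the origin under $\exp$, torsion-freeness of $\Gamma(N)$ for $N\geq 3$, Liouville on the bounded realization of $\mathbb{H}_g$, and quasi-finiteness of the Torelli map. Your route buys independence from the nontrivial Javanpeykar--Vezzani characterization (whose proof requires structure and reduction theory of group schemes over non-archimedean fields), while the paper's route gets to quote the complex hyperbolicity of $\mathcal{M}^{[N]}_{g,\mathbb{C}}$ as a classical black box instead of re-deriving it as you do. Two steps you assert without argument are standard but deserve a line each: that constancy descends from $\mathbb{C}$ back to $K$ (the image of the $k_0$-model of $\bar f$ is the image of a closed point, hence a single closed point with residue field finite over $k_0$, so $\bar f$ factors through a finite discrete scheme and connectedness of $G$ concludes), and that $G_{\mathbb{C}}$ is again connected and irreducible (connectedness of $G$ over the algebraically closed field $K$ is geometric, so it persists through the model over $k_0$ and the embedding $k_0\hookrightarrow\mathbb{C}$).
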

There are notable differences between the complex case and non-archimedean case. For example, Berkovich {\cite[Theorem~4.5.1]{Ber90}} proved that Picard's Little Theorem holds for $\mathbb{G}_{m,K}$, i.e., there is no non-constant rigid analytic morphism from $\mathbb{A}^{1,\mathrm{an}}_K$ to $\mathbb{G}^{\mathrm{an}}_{m,K}$, which is contrary to the complex analytic case.
In fact this is the reason that Javanpeykar-Vezzani do not use the naive non-archimedean counterpart of the notion of complex Brody hyperbolicity in their aforementioned paper, since one does not want to regard $\mathbb{G}_{m,K}$ as a ``hyperbolic'' variety.\\ 
It is interesting to look for a suitable notion of non-archimedean hyperbolicity. As we have seen above,  $K$-analytically Brody hyperbolic varieties defined in \cite{Java-Vezz} are non-archimedean analogue of ``groupless varieties'' ( cf. \cite{JX19, JK20} ) and should therefore correspond to hyperbolic varieties by Lang's Conjecture {\cite[Conjecture~12.1]{Java20}}. In the aforementioned paper \cite{Java-Vezz}, they also suggested to pursue the non-archimedean analogue of Borel hyperbolicity, which we define as follows. 
\begin{defi}[Non-archimedean Borel hyperbolicity]\label{K-Borel-defi}
Let $K$ be a complete algebraically closed non-archimedean valued field of characteristic zero, and let $X$ be a finite type scheme over $K$.  We say $X$ is \emph{$K$-analytically Borel hyperbolic} if, for every finite type reduced scheme $S$ over $K$, any morphism $S^{\mathrm{an}} \to X^{\mathrm{an}}$ is algebraic.  
\end{defi}
%\begin{lem}
%$X$ is $K$-analytically Borel hyperbolic $\Longrightarrow$ $X$ is $K$-analytically Brody hyperbolic.   
%\end{lem}
%For $g\geq 2$ and $N\geq 3$, we let $\mathcal{M}_g^{[N]}$ be the fine moduli space over $\mathbb{Q}$ of smooth proper curves of genus $g$ with level $N$-structure. If $K$ is a field of characteristic zero, we let $\mathcal{M}_{g,K}^{[N]}$ denote the base-change of $\mathcal{M}_g^{[N]}$ along $\Spec K \to \Spec \mathbb{Q}$. 
Our main result is the following:
\begin{thmx}\label{thm-A}
If $g\geq 2$, $N\geq 3$, and $K$ is a complete algebraically closed non-archimedean valued field of characteristic zero, then $\mathcal{M}_{g,K}^{[N]}$ is $K$-analytically Borel hyperbolic.
\end{thmx}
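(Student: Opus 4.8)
The plan is to deduce Borel hyperbolicity from a non-archimedean big Picard statement, feeding that statement the geometric positivity produced by the Viehweg-Zuo construction. First I would record the following reduction, which is the non-archimedean counterpart of a lemma of Javanpeykar-Kucharczyk: to prove that every morphism $S^{\mathrm{an}} \to (\mathcal{M}^{[N]}_{g,K})^{\mathrm{an}}$ is algebraic, it suffices to treat the case in which $S$ is a smooth affine curve and then to extend the induced map across the punctures of a smooth compactification. Fix a smooth projective model $\overline{Y}$ of (a resolution of) a compactification of $\mathcal{M}^{[N]}_{g,K}$ with reduced normal crossings boundary $D := \overline{Y}\setminus \mathcal{M}^{[N]}_{g,K}$. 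Given $f\colon S^{\mathrm{an}}\to (\mathcal{M}^{[N]}_{g,K})^{\mathrm{an}}$ with $S$ a smooth affine curve and smooth compactification $\overline{S}$, the goal is to extend $f$ to $\overline{f}\colon \overline{S}^{\mathrm{an}}\to \overline{Y}^{\mathrm{an}}$. Once this is achieved, $\overline{f}$ is a morphism from a proper (hence compact) rigid analytic curve to a projective variety, so by rigid-analytic GAGA it is the analytification of an algebraic morphism $\overline{S}\to \overline{Y}$, and restricting recovers algebraicity of $f$. A general reduced $S$ is handled by the same compactify-extend-GAGA scheme, the boundary extension being reduced to punctured discs transverse to $D$ via a rigid-analytic Riemann extension theorem.

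The geometric input is the Viehweg-Zuo construction applied to the universal family $\mathcal{C}\to \mathcal{M}^{[N]}_{g,K}$ of genus $g\geq 2$ curves. Since $N\geq 3$ the moduli problem is fine, so the family exists and has maximal variation. Passing, if necessary, to a finite cover realizing semistable reduction along $D$, the logarithmic variation of Hodge structure of the family yields a logarithmic Higgs bundle $(E,\theta)$ on $\overline{Y}$ with $\theta\colon E\to E\otimes \Omega^1_{\overline{Y}}(\log D)$, and iterating the Higgs field against the positivity of the Hodge bundles produces a big invertible subsheaf
\[
\mathcal{A}\ \hookrightarrow\ \mathrm{Sym}^{m}\,\Omega^1_{\overline{Y}}(\log D)
\]
for some $m\geq 1$. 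This supply of logarithmic symmetric differentials encodes the hyperbolicity of $\mathcal{M}^{[N]}_{g,K}$ and is the hypothesis under which the Cherry-Ru $K$-analytic big Picard theorem can be invoked.

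The heart of the argument, and the step I expect to be the main obstacle, is the boundary extension. Given a rigid analytic arc $\gamma\colon \mathbb{D}^{*}\to (\mathcal{M}^{[N]}_{g,K})^{\mathrm{an}}$ around a puncture of $\overline{S}$, I would pull back $(E,\theta)$ and $\mathcal{A}$ along $\gamma$ and use the Higgs field to control the non-archimedean growth of $\gamma$: the logarithmic comparison provided by $\theta$ bounds the derivative of the period map, playing the role of the archimedean lemma on the logarithmic derivative. Combined with the bigness of $\mathcal{A}$, this places $\gamma$ in the regime of the Cherry-Ru theorem, which then forces $\gamma$ to extend to a map $\mathbb{D}\to \overline{Y}^{\mathrm{an}}$. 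The delicate points will be (i) verifying that the Viehweg-Zuo positivity is strong enough, after possibly passing to a cover, to meet the exact hypotheses of Cherry-Ru, and (ii) controlling the behaviour along $D$ so that the extended map lands in $\overline{Y}$ rather than merely in a birational modification.

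Finally, I would descend along the auxiliary finite cover: a map $S^{\mathrm{an}}\to (\mathcal{M}^{[N]}_{g,K})^{\mathrm{an}}$ pulls the cover back to a finite étale cover of $S^{\mathrm{an}}$, which by rigid-analytic GAGA for finite étale covers is algebraic; applying the extension and GAGA upstairs and then descending yields algebraicity of the original map. This reduces Theorem~\ref{thm-A} to the big Picard extension supplied by the Viehweg-Zuo bundle together with the Cherry-Ru theorem.
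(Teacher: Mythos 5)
Your skeleton (reduce to maps from curves, produce logarithmic symmetric differentials from the Viehweg--Zuo construction, extend across punctures via Cherry--Ru, conclude by rigid GAGA) is indeed the paper's strategy, and your reduction-to-curves step corresponds to a theorem the paper proves following Javanpeykar--Kucharczyk. But there is a genuine gap where you invoke Viehweg--Zuo: you apply it directly over $K$, speaking of ``the logarithmic variation of Hodge structure of the family'' on $\overline{Y}$. Over a non-archimedean field there is no such variation of Hodge structure: only the graded Higgs bundle itself (higher direct images $R^qg_*T^q_{X/Y}(-\log \Delta)$ and the edge maps) is algebraic, while the \emph{bigness} of the Viehweg--Zuo subsheaf is proved by transcendental, Hodge-theoretic methods that make no sense over $K$. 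The paper resolves this by a spreading-out argument you omit entirely: descend the compactified family and the Higgs data to a finitely generated subfield $L \subset K$, embed $L \hookrightarrow \mathbb{C}$, apply Viehweg--Zuo to the resulting complex family, observe that the output --- the ample sheaf $\mathcal{H}_{\mathbb{C}}$ and the morphism $\mathrm{Sym}^{m\eta}T_{Y_{\mathbb{C}}}(-\log S_{\mathbb{C}}) \to \bigoplus \mathcal{H}^{\vee}_{\mathbb{C}}$ --- is purely algebraic, hence defined over an enlargement of $L$, and base change it back to $K$. Without this step your inclusion $\mathcal{A} \hookrightarrow \mathrm{Sym}^{m}\Omega^1_{\overline{Y}}(\log D)$ over $K$ has no proof.

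The second gap is in verifying the hypotheses of the Cherry--Ru theorem. It requires a symmetric differential $\omega$ vanishing along an ample divisor \emph{and} satisfying $f^*\omega \not\equiv 0$; bigness supplies a large family of differentials meeting the first condition, but all of them could pull back to zero along a particular map $f$ whose image lies in the degeneracy locus of the (dual of the) iterated Higgs map. Your proposed mechanism --- the Higgs field ``bounds the derivative of the period map, playing the role of the archimedean lemma on the logarithmic derivative'' --- has no non-archimedean content: there is no period map over $K$, and Cherry--Ru's theorem uses none; what must be checked is precisely the non-vanishing $f^*\omega \not\equiv 0$, which you flag as delicate but never address. The paper's argument is: replace $U_K$ by the Zariski closure of $f(C^{\mathrm{an}}_K)$ (legitimate because the classifying map of the universal family is quasi-finite, so the restricted family still has maximal variation and the whole construction can be rerun on the smaller base), then use that for families of curves the Griffiths--Yukawa coupling length is $1$, so the relevant map $T_{Y_K}(-\log S_K) \to \mathcal{P}^{\vee}_K$ factors through the Kodaira--Spencer map, which is generically injective by maximal variation; Zariski density of the image of $f$ in the new base then forces the composition along $f$ to be nonzero. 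Without restricting to the closure of the image and without the coupling-length-one argument, the key non-vanishing hypothesis cannot be concluded, and the extension step collapses.
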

%As an application of the Borel hyperbolicity of the moduli space of curves, we prove its $K$-analytic Brody hyperbolicity.
{Theorem~\ref{thm-A} plus the characterization of $K$-analytic Brody hyperbolicity in {\cite[Theorem~2.18]{Java-Vezz}} imply Theorem~\ref{Cor-B}. Details of the proofs are given in Section~\ref{spreading-out}.}

%\begin{rmk}
%As we mentioned before, the $K$-analytic Brody hyperbolicity of $\mathcal{M}^{[N]}_{g,K}$ has been proven for the equi-characteristic case in \cite{Java-Vezz}.  
%\end{rmk}
Our proof of Theorem~\ref{thm-A} uses the Viehweg-Zuo construction associated to the universal family over $\mathcal{M}_{g,K}^{[N]}$ and the $K$-analytic big Picard theorem of Cherry-Ru (Theorem~\ref{extension}). 
Write the universal family as $V_K \to U_K$ for simplicity.
First we show a non-archimedean analogue of \cite[Theorem~1.5]{Java-Kuch}, which helps us to reduce the general case to the case of rigid analytic morphisms from curves. This is done in Section~\ref{curve-testing}. To show the algebraicity of a rigid analytic morphism $f$ from a curve $C_K$ to the base space $U_K$ of the family, we shall use the $K$-analytic big Picard theorem of Cherry-Ru to extend $f$ as a rigid analytic morphism between the compactifications of $C_K$ and $U_K$. Then the algebraicity follows from the rigid GAGA theorem.\\
As we mentioned, the $K$-analytic big Picard theorem of Cherry-Ru \cite{Cherry-Ru} plays a crucial role in this paper.
Cherry-Ru's theorem requires the existence of certain symmetric differentials of the base space $U_K$ which do not vanish along the rigid analytic morphism $f$. We shall use the Viehweg-Zuo construction associated to the family $V_K \to U_K$ to produce such a symmetric differential. {Basics of the Viehweg-Zuo construction are reviewed in Section~\ref{VZ-construction}}. However, the Viehweg-Zuo construction is designed for families over complex numbers. {So in Section~\ref{spreading-out}, we shall descend the original family to a family over a finitely generated subfield of $K$, base change it to get a complex family, apply the usual Viehweg-Zuo construction to this complex family and produce a symmetric differential on the complex base space, and use the descent argument again to obtain a symmetric differential on $U_K$. The final step is to check that all the requirements in the theorem of Cherry-Ru are satisfied, which concludes the proof.}\\
We should mention that before the paper of Javanpeykar-Vezzani there are works on non-archimedean hyperbolic geometry from the point of view of Nevanlinna theory, for instance, variant non-archimedean analogues of Picard theorems ( cf. \cite{Cherry94, Ru01, Cherry-Wang, Cherry-Ru} ).\\[.2cm] 
For families of abelian varieties, 
{one can also use the Viehweg-Zuo construction to produce symmetric differentials on the base spaces, but it requires more work to check that they do not vanish along given rigid analytic morphisms.}
It is natural to ask:
\begin{ques}
Is $\mathcal{A}^{[N]}_{g,K}$ $K$-analytically Borel hyperbolic for $N \geq 3$?  
\end{ques}
For the complex case, $\mathcal{A}^{[N]}_{g,\mathbb{C}}$ is known to be Borel hyperbolic for $N \geq 3$ by the work of Borel \cite{Borel} ( in fact the name ``Borel hyperbolicity'' comes from the algebraicity theorem of Borel on the arithmetic quotients of bounded symmetric domains ).\\[.3cm]
\noindent{\bf Acknowledgment.} This paper owes a tremendous debt to Ariyan Javanpeykar: for giving me the topic on the non-archimedean hyperbolicity of moduli spaces, for invaluable comments and suggestions he provided and for helping me to improve this paper. I am grateful to Alberto Vezzani for very helpful suggestions on Section~\ref{curve-testing}. I would like to thank Professor Kang Zuo for explaining his celebrated work with Viehweg to me, and for his constant supports and encouragements.

%\section{Families with length-$1$ Griffiths-Yukawa Coupling}
\section{Recollections about Viehweg-Zuo construction}\label{VZ-construction}
We first consider families of {smooth projective connected varieties} over complex numbers.
Let $\mathcal{M}_h$ be the stack of smooth proper polarized varieties with semi-ample canonical divisor and Hilbert polynomial $h$ {over $\mathbb{Q}$}. Let $U$ be a smooth quasi-projective variety over $\mathbb{C}$. Let $\varphi:\, U \to \mathcal{M}_h \otimes \mathbb{C}$ be a morphism of stacks which is generically finite onto its image. {Note that $\varphi$ is the classifying map of a smooth family $V \to U$ of polarized varieties.} We denote by $n$ the fiber dimension, so that $n=\mathrm{deg}\,h$.\\
The next step is to compactify the family. First we find the smooth compactifications $U \subset Y$ and $V \subset X$ with the simple normal crossing boundary divisors $S := Y \setminus U$ and $\Delta:=X \setminus V$. Then $\pi:\, V \to U$ extends to a projective morphism $g:\, X \to Y$. We use the same notation $g$ to denote the induced log morphism $(X,\Delta) \to (Y,S)$.
After desingularization process we can assume that $g$ is a log smooth morphism over a big open subset containing $U$ (a Zariski open subset is said to be big if its complement has codimension at least $2$). After leaving out some codimension-2 subvarieties, we still use the notation $g:\,(X,\Delta) \to (Y,S)$, which is a partial compactification of $\pi:\, V \to U$.\\
Viehweg-Zuo construct a graded Higgs bundle $(F,\tau)$ which has a close relation with the deformation theory of the family (see {\cite[\S6]{VZ-1}} and {\cite[\S4]{VZ-2}} for details). Recall that $(F,\tau)$ has a bi-graded structure $(\bigoplus_{p+q=n}F^{p,q}, \bigoplus_{p+q=n}\tau^{p,q})$, where
\[
F^{p,q} := R^qg_*T^q_{X/Y}(-\log \Delta) /_{\mathrm{torsion}},
\]
and the component of the Higgs map $\tau^{p,q}$ is induced by the edge morphism of a long exact sequence of higher direct image sheaves
\[
\tau^{p,q}:\, R^qg_*T^q_{X/Y}(-\log \Delta) \to \Omega^1_Y(\log S) \otimes R^{q+1}g_*T^{q+1}_{X/Y}(-\log \Delta).
\]
We extend $F^{p,q}$ to the compactification by taking the reflexive hull. The Higgs maps $\tau^{p,q}$ extend automatically over codimension-2 subvarieties. Hereafter we still use $Y$ to denote the compactification of $U$, and $(F,\tau)$ is the graded Higgs bundle over $Y$.\\
One can iterate the Higgs map
\[
F^{n,0} \cong \mathcal{O}_Y \xrightarrow{\tau^{n,0}} \Omega^1_Y(\log S) \otimes F^{n-1,1} \xrightarrow{\mathrm{Id}\otimes \tau^{n-1,1}} \Omega^1_Y(\log S)^{\otimes 2} \otimes F^{n-2,2} \to \cdots
\]
and obtain $\mathcal{O}_Y \to \Omega^1_Y(\log S)^{\otimes k} \otimes F^{n-k,k}$ for each $k$-th iteration. By the integrable condition of the Higgs map $\tau$, we have the factorization $\mathcal{O}_Y \to \mathrm{Sym}^k \Omega^1_Y(\log S) \otimes F^{n-k,k}$, which induces the map 
\[
\tau^k:\, \mathrm{Sym}^kT_Y(-\log S) \to F^{n-k,k}.
\]
For $k=1$ it becomes
\[
\tau^1:\, T_Y(-\log S) \to R^1g_*T_{X/Y}(-\log \Delta)
\]
which is exactly the (log) Kodaira-Spencer map associated to the family $g$.\\
%\subsection{Griffiths-Yukawa coupling length of a family}
For the given family $g:\, (X,\Delta) \to (Y,S)$ we can find a positive integer $m$, which is called the \emph{Griffiths-Yukawa coupling length} of the family $g$, such that the $m$-th iteration $\tau^m$ is a nonzero map and it factors through
\begin{align}\label{tau-m}
\tau^m:\, \mathrm{Sym}^mT_Y(-\log S) \to N^{n-m,m}
\end{align}
where $N^{n-m,m}:= \mathrm{Ker}(\tau^{n-m,m})$. Taking the image of the dual of $\tau^m$, we can obtain a subsheaf $\mathcal{P}$ of $\mathrm{Sym}^m\Omega^1_Y(\log S)$. 
Recall that Viehweg defined the bigness for torsion-free sheaves. See Definition~1.1 and Lemma~1.2 in \cite{VZ-2} for details.
For this subsheaf $\mathcal{P}$ Viehweg-Zuo proved the following important result:
\begin{thm}[Viehweg-Zuo, {\cite[Theorem~1.4]{VZ-2}}]\label{VZ-big-sheaf}
Let $V \to U$ be a smooth family of polarized varieties with semi-ample canonical divisor. Suppose that the family has maximal variation, i.e. the induced classifying map from $U$ to the moduli space is generically finite onto its image. 
Then the subsheaf $\mathcal{P}$ constructed above is big in the sense of Viehweg. In particular, one can find an ample invertible sheaf $\mathcal{H}$, some positive integer $\eta$ and a morphism 
\[
\bigoplus \mathcal{H} \longrightarrow \mathrm{Sym}^{\eta} \mathcal{P}
\]
which is surjective over some Zariski open subset of $U$.  
\end{thm}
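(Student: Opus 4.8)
The plan is to produce a big subsheaf of $\mathrm{Sym}^m\Omega^1_Y(\log S)$ lying inside $\mathcal{P}$, by combining Viehweg's positivity theorem for direct images of powers of the relative dualizing sheaf with the Hodge-theoretic Higgs bundle of an auxiliary cyclic cover, and then transporting the resulting positivity through the iterated Higgs field $\tau^m$ and its dual. First I would extract the positivity: since $V\to U$ has semi-ample relative canonical bundle and maximal variation, Viehweg's weak positivity for $g_*\omega^{\nu}_{X/Y}(\log\Delta)$, combined with the fibre-product and covering tricks, shows that for a suitable $\nu$ the determinant $\det g_*\omega^{\nu}_{X/Y}(\log\Delta)$ is big. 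After a generically finite base change and a blow-up, which I may absorb into the pair $(Y,S)$ and $(X,\Delta)$, this yields an ample invertible sheaf $\mathcal{A}$ on $Y$ realized by an effective divisor on a modification of $X$. This is the step that uses maximal variation essentially and carries the deepest classical input.

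Next I would run the cyclic-cover construction. From a section of a large power of the line bundle furnished above I would form a branched cyclic cover $W\to X$, desingularize, and extract from the associated logarithmic variation of Hodge structure a system of Hodge bundles $(E,\theta)$ on $Y$, with $\theta:E^{p,q}\to E^{p-1,q+1}\otimes\Omega^1_Y(\log S)$. The construction is arranged so that $\mathcal{A}$, up to the twist introduced by the cover, embeds into a graded piece of $(E,\theta)$ as a Higgs subsheaf, and so that there is a comparison morphism of Higgs bundles relating $(E,\theta)$ to the deformation Higgs bundle $(F,\tau)$ and intertwining $\theta$ with $\tau$. Pushing $\mathcal{A}$ through this comparison yields, on the deformation side, a generically surjective map $N^{n-m,m}\to\mathcal{A}^{-1}$ compatible with the iterated field $\tau^m$.

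Finally I would iterate and dualize. Because a Hodge-theoretic Higgs field is nilpotent, the iteration terminates, and the positivity transported through the comparison guarantees that $\tau^m$ is nonzero and factors through the kernel $N^{n-m,m}$, which is exactly the factorization \eqref{tau-m} defining the Griffiths-Yukawa coupling length $m$. Composing with the surjection $N^{n-m,m}\to\mathcal{A}^{-1}$ gives a generically surjective map $\mathrm{Sym}^m T_Y(-\log S)\to\mathcal{A}^{-1}$; dualizing embeds the big sheaf $\mathcal{A}$ into $\mathrm{Sym}^m\Omega^1_Y(\log S)$ with image contained in $\mathcal{P}$. Thus $\mathcal{P}$ contains a big subsheaf and is itself big in the sense of \cite[Definition~1.1]{VZ-2}, whence the surjection $\bigoplus\mathcal{H}\to\mathrm{Sym}^{\eta}\mathcal{P}$ over a Zariski open subset of $U$ follows from \cite[Lemma~1.2]{VZ-2}.

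I expect the main obstacle to be the comparison morphism between the Hodge bundle $(E,\theta)$ of the cyclic cover and the deformation Higgs bundle $(F,\tau)$: one must construct it while controlling the logarithmic poles along $S$ and $\Delta$, keep track of the codimension-two loci discarded in passing to the partial compactification, and verify that the twist introduced by the cover does not annihilate the iterate, so that the nilpotency length of $\theta$ genuinely matches the Griffiths-Yukawa coupling length $m$ and $\mathcal{A}$ survives all the way into $\mathrm{Sym}^m\Omega^1_Y(\log S)$. Securing this compatibility, rather than the formal dualization at the end, is where the genuine content of Viehweg-Zuo lies.
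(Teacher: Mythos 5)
First, a point of comparison: the paper does not prove this statement at all --- it is quoted directly from Viehweg--Zuo \cite[Theorem~1.4]{VZ-2}, together with the notion of bigness from \cite[Definition~1.1, Lemma~1.2]{VZ-2} --- so your attempt can only be measured against the original Viehweg--Zuo argument. Your outline does reproduce its overall architecture: Viehweg's positivity plus maximal variation produce an ample invertible sheaf $\mathcal{A}$; a cyclic cover branched along a section of a high power of the associated line bundle produces a logarithmic Hodge-theoretic Higgs bundle $(E,\theta)$; a comparison morphism relates $(E,\theta)$ to the deformation Higgs bundle $(F,\tau)$; and the iterated Higgs field transports $\mathcal{A}$ into $\mathrm{Sym}^m\Omega^1_Y(\log S)$.

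However, your concluding step contains a genuine gap: from ``$\mathcal{A}$ embeds into $\mathcal{P}$'' you infer that ``$\mathcal{P}$ contains a big subsheaf and is itself big in the sense of Viehweg.'' That implication is false. Viehweg's bigness is a quotient-type condition --- by \cite[Lemma~1.2]{VZ-2} it is precisely the existence of $\bigoplus\mathcal{H}\to\mathrm{Sym}^{\eta}\mathcal{P}$ surjective over a dense open subset --- and it is not inherited from an ample subsheaf. For instance, $\mathcal{P}=\mathcal{O}_Y\oplus\mathcal{A}$ on a projective $Y$ contains the ample sheaf $\mathcal{A}$, yet any map $\bigoplus\mathcal{H}\to\mathrm{Sym}^{\eta}(\mathcal{O}_Y\oplus\mathcal{A})$ surjective over a dense open would, after composing with the projection onto the direct summand $\mathcal{O}_Y$, give a nonzero map $\mathcal{H}\to\mathcal{O}_Y$, i.e.\ a nonzero section of $\mathcal{H}^{-1}$, which is impossible for $\mathcal{H}$ ample. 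What closes this gap in Viehweg--Zuo's proof is the one ingredient your outline never mentions: Zuo's negativity theorem for kernels of Higgs fields of (logarithmic) variations of Hodge structure, which asserts that the kernel $K\subset E^{n-m,m}$ of $\theta$ has weakly positive dual over $U$. The iterated comparison map reads $\mathcal{A}\otimes\mathrm{Sym}^m T_Y(-\log S)\to K$, and dualizing exhibits $\mathcal{P}$ as a generic quotient of $\mathcal{A}\otimes K^{\vee}$, i.e.\ of $(\text{ample})\otimes(\text{weakly positive})$; such a sheaf is big, and generic quotients of big sheaves remain big, which is exactly what the definition demands. (Your intermediate claim of a ``generically surjective map $N^{n-m,m}\to\mathcal{A}^{-1}$'' would likewise only yield a subsheaf statement after dualizing, so it does not repair this.) In short, the ``formal dualization at the end'' that you dismiss as routine is precisely where the proof would break without the negativity theorem.
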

The subsheaf $\mathcal{P}$ is commonly referred as the \emph{Viehweg-Zuo big subsheaf}.

\section{Testing $K$-analytic Borel hyperbolicity on maps from curves}\label{curve-testing}
In this section we shall prove a non-archimedean analogue of Theorem~1.5 in \cite{Java-Kuch}. {Hereafter we denote $X^{\mathrm{an}}$ as the rigid analytification of a finite type $K$-scheme $X$ {\cite[\S5.4]{Bosch}}.}
\begin{thm}[Testing $K$-analytic Borel hyperbolicity on maps from curves]\label{K-testing}
Let $K$ be an algebraically closed field of characteristic $0$ which is complete with respect to some non-archimedean valuation.
Let $X$ be a finite type separated scheme over $K$. Then the following are equivalent.
\begin{itemize}
\item[(i)] $X$ is $K$-analytically Borel hyperbolic (Definition~\ref{K-Borel-defi}).
\item[(ii)] For every smooth connected algebraic curve $C$ over $K$, every rigid analytic morphism $C^{\mathrm{an}} \to X^{\mathrm{an}}$ is algebraic. 
\end{itemize}
\end{thm}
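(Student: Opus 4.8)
The implication (i) $\Rightarrow$ (ii) is immediate: a smooth connected algebraic curve over $K$ is in particular a finite type reduced scheme, so Definition~\ref{K-Borel-defi} applies verbatim. The content is the converse, and throughout I record the one consequence of (ii) that I will use: for every morphism $\nu\colon C\to S$ from a smooth connected curve, the composite $f\circ\nu^{\mathrm{an}}\colon C^{\mathrm{an}}\to X^{\mathrm{an}}$ is an analytic map from a curve, hence algebraic. The plan is to encode algebraicity of $f\colon S^{\mathrm{an}}\to X^{\mathrm{an}}$ through its graph. Since $X$ is separated, the graph $\Gamma_f\subseteq (S\times_K X)^{\mathrm{an}}$ is a closed analytic subvariety and the first projection restricts to an isomorphism $\Gamma_f\xrightarrow{\sim}S^{\mathrm{an}}$; by definition $f$ is algebraic precisely when $\Gamma_f$ is the analytification of a closed subscheme. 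Choosing by Nagata an open immersion $X\hookrightarrow\bar X$ with $\bar X/K$ proper, it suffices to prove the composite $\bar f\colon S^{\mathrm{an}}\to\bar X^{\mathrm{an}}$ is algebraic: an algebraic $h\colon S\to\bar X$ with $h^{\mathrm{an}}=\bar f$ automatically factors through $X$, because $h^{-1}(X)$ is open and its analytification is all of $S^{\mathrm{an}}$.

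First I would reduce the source. As $S$ is reduced it is the finite union of its integral components, and $f$ is algebraic iff its restriction to each is; so assume $S$ integral. I then set up a Noetherian induction on $d:=\dim S$, the cases $d\le 1$ being exactly (ii) (for $d=1$, pass to the normalization of $S$). In the inductive step it suffices to produce a dense open $U\subseteq S$ over which $f$ is algebraic: the reduced complement $S\setminus U$ has dimension $<d$, so $f$ is algebraic there by induction, and the two algebraic pieces are glued back together using that the scheme-theoretic closure of the graph (introduced below) restricts isomorphically to each stratum over a reduced base.

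The core is the following. Let $Z\subseteq S\times_K\bar X$ be the smallest closed subscheme with $\Gamma_f\subseteq Z^{\mathrm{an}}$; it exists by Noetherianity and is algebraic by construction, and since $\Gamma_f\cong S^{\mathrm{an}}$ is irreducible, $Z$ is integral with $\dim Z\ge d$ and the projection $q\colon Z\to S$ is dominant. I claim $f$ is algebraic over a dense open as soon as $\dim Z=d$: then $q$ is generically finite, over the finite étale locus $Z^{\mathrm{an}}\to S^{\mathrm{an}}$ is a connected (as $Z$ is integral) finite cover admitting the analytic section $\Gamma_f$, which forces degree $1$; hence $q$ is birational, an isomorphism over a dense open $U$, and $f|_{U^{\mathrm{an}}}$ is the analytification of $q^{-1}$ followed by projection to $\bar X$. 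It remains to prove $\dim Z=d$, and this is where (ii) enters: after embedding $S$ I choose a $(d-1)$-dimensional algebraic family $\{C_t\}_{t\in T}$ of smooth curves (e.g. the fibres of a general projection to $\mathbb{A}^{d-1}$, or general linear sections) covering a dense open of $S$. For each $t$ the restriction $f|_{C_t}$ is algebraic by (ii), so the closure $Z_{C_t}$ of its graph in $C_t\times\bar X$ is an algebraic curve; since $\bigcup_t\Gamma_{f|C_t}$ is dense in $\Gamma_f$, we get $Z=\overline{\bigcup_t Z_{C_t}}$. If the curves $Z_{C_t}$ form a bounded family, parametrized by a finite-type Chow variety, their union has dimension at most $1+\dim T=d$, forcing $\dim Z\le d$ and hence $\dim Z=d$.

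The main obstacle is exactly this last point: the uniform boundedness of the algebraic maps $f|_{C_t}$, i.e. that their graph-closures $\{Z_{C_t}\}$ sweep out a finite-type family rather than one of unbounded degree. In the non-archimedean setting I expect to control this through boundedness of rigid analytic functions on affinoids and the maximum modulus principle, applied after covering $T$ and the total space of the family by affinoids; this is the step that genuinely uses $K$-analytic geometry rather than formal manipulation. The remaining difficulties are foundational but must be handled with care: that $\Gamma_f$ is a closed analytic subvariety and its scheme-theoretic Zariski closure is well behaved, the irreducibility of $S^{\mathrm{an}}$ and $\Gamma_f$ together with the fact that a connected finite étale cover admitting a section is trivial in the rigid category, and the gluing of the stratum-wise algebraic maps into a single morphism $S\to\bar X$ whose analytification is $f$.
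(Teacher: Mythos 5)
Your direction (i) $\Rightarrow$ (ii) and the general setup (graphs, Nagata compactification, reduction to $S$ integral, induction on dimension) are fine, but the core of your argument for (ii) $\Rightarrow$ (i) has a genuine gap that you yourself flag: the uniform boundedness of the family $\{Z_{C_t}\}_{t\in T}$. Everything hinges on showing $\dim Z = d$, and your dimension count requires the graph-closures $Z_{C_t}$ to have bounded degree so that they sweep out a finite-type (Chow-type) family. This is not a technical loose end; it is the entire difficulty of the theorem, in disguise. If $f$ were transcendental, its restrictions $f|_{C_t}$ could a priori still all be algebraic, necessarily of unbounded degree as $t$ varies --- ruling out exactly this scenario is equivalent to the statement you are proving. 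The tool you propose (maximum modulus principle on affinoid covers) does not plausibly close the gap: sup-norm bounds on affinoid subdomains say nothing about the degree of the Zariski closure of an analytic graph, since a bounded analytic function on an affinoid has no meaningful bound on its algebraic degree. A secondary unaddressed point: even granting bounded degree, your dimension count needs the assignment $t \mapsto Z_{C_t}$ to be constructible, or at least to have image of dimension at most $d-1$ in the parametrizing scheme, which also requires an argument.

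The paper circumvents boundedness entirely, and the mechanism is quite different, so it is worth recording. Instead of a $(d-1)$-dimensional family of curves, it uses a \emph{single} codimension-one slice: after a Noether normalization $\pi\colon V \to \mathbb{A}^{n+1}$ defined over a countable algebraically closed subfield $k \subset K$, it cuts by the ``$k$-generic'' hyperplane $z_{n+1} = \lambda_1 z_1 + \cdots + \lambda_n z_n$ with $\lambda_1,\dots,\lambda_n \in K$ algebraically independent over $k$. Algebraicity of the restriction of $f$ to (the normalization of) this one hypersurface --- available by induction on dimension --- is then propagated to $f$ itself through the formal-power-series specialization Lemma~\ref{spec_lem}: the transcendence of the $\lambda_i$ forces the germs of the coordinate functions of $f$ to be algebraic over $k(x_1,\dots,x_{n+1})$. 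The integral-closure statements (Proposition~\ref{prop-2.2}, Proposition~\ref{prop-2.3} via the Hebbarkeitssatz, and Corollary~\ref{int_close}) then upgrade ``algebraic germ'' to ``regular function''. This is the content of the Dimension Lemma (Proposition~\ref{dim-lem}), and it replaces your unproven uniformity-in-a-family step with a transcendence argument that needs no boundedness at all. To salvage your graph-theoretic framing you would have to either prove the boundedness statement (which I do not believe follows from affinoid estimates) or substitute the generic-hyperplane/specialization trick into your inductive step.
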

In the rest of this section we will prove the non-archimedean counterpart of the complex-analytic results in section~2.2 of \cite{Java-Kuch}. After establishing those results about the rigid analytification, we shall give a proof of Theorem~\ref{K-testing} following the line of reasoning in \cite{Java-Kuch}.
\subsection{Some facts about the rigid analytification}
Let $X$ be a finite type $K$-scheme and $X^{\mathrm{an}}$ be its rigid analytification. We first study the relation between the ring of regular functions on $X$ and the ring of analytic functions on $X^{\mathrm{an}}$.
\begin{prop}[Analogue of Proposition~2.2 in \cite{Java-Kuch}]\label{prop-2.2}
If $X$ is a finite type integral scheme over $K$ of pure dimension, then the ring $\mathcal{O}(X)$ of global regular functions is integrally closed in
 the ring $\mathcal{H}(X^{\mathrm{an}})$ of global analytic functions.  
\end{prop}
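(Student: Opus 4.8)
The plan is to convert an integral dependence relation into a finite cover and then recognize the analytic ``graph'' of $f$ as the analytification of an irreducible component. Suppose $f\in\mathcal{H}(X^{\mathrm{an}})$ satisfies a monic relation $f^{n}+a_{n-1}f^{n-1}+\dots+a_{0}=0$ with $a_{i}\in\mathcal{O}(X)$. Because $P(T)=T^{n}+a_{n-1}T^{n-1}+\dots+a_{0}$ is monic, the sheaf $\mathcal{O}_{X}[T]/(P)$ is locally free of rank $n$, so $W:=\operatorname{Spec}_{X}\bigl(\mathcal{O}_{X}[T]/(P)\bigr)\hookrightarrow X\times\mathbb{A}^{1}_{K}$ is finite and flat over $X$ of degree $n$. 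The relation $P(f)=0$ says exactly that the analytic map $(\mathrm{id},f)\colon X^{\mathrm{an}}\to X^{\mathrm{an}}\times\mathbb{A}^{1,\mathrm{an}}_{K}$ factors through $W^{\mathrm{an}}$, yielding an analytic section $s\colon X^{\mathrm{an}}\to W^{\mathrm{an}}$ of $\pi^{\mathrm{an}}\colon W^{\mathrm{an}}\to X^{\mathrm{an}}$ whose second coordinate recovers $f$.

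First I would record two geometric facts. Since $X$ is integral, its regular locus $X_{\mathrm{reg}}$ is a smooth connected $K$-variety, so $X_{\mathrm{reg}}^{\mathrm{an}}$ is connected; as it is the regular locus of $X^{\mathrm{an}}$, this forces $X^{\mathrm{an}}$ to be irreducible, and the same reasoning applies to each irreducible component $W_{i}$ of $W$, making $W_{i}^{\mathrm{an}}$ irreducible. The underlying set of $W^{\mathrm{an}}$ is the union of the $W_{i}^{\mathrm{an}}$. Because $P$ is monic, $\pi$ is finite flat over the irreducible base $X$, so every $W_{i}$ dominates $X$ and has dimension $d:=\dim X$.

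Now I would identify the section. As a section of the separated map $\pi^{\mathrm{an}}$, the morphism $s$ is a closed immersion, so $Z:=s(X^{\mathrm{an}})$ is a closed analytic subvariety isomorphic to $X^{\mathrm{an}}$, hence irreducible of dimension $d$. Being irreducible, $Z$ lies in a single $W_{i}^{\mathrm{an}}$; since both are irreducible, closed, and of dimension $d$, we conclude $Z=W_{i}^{\mathrm{an}}$. Therefore $\pi^{\mathrm{an}}|_{W_{i}^{\mathrm{an}}}=s^{-1}\colon W_{i}^{\mathrm{an}}\xrightarrow{\sim}X^{\mathrm{an}}$ is an analytic isomorphism. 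The finite morphism of schemes $\pi|_{W_{i}}\colon W_{i}\to X$ thus analytifies to an isomorphism, and since analytification is faithfully flat on local rings and so detects isomorphisms of finite $X$-schemes, $\pi|_{W_{i}}$ is itself an isomorphism. Its inverse $\sigma\colon X\xrightarrow{\sim}W_{i}\hookrightarrow X\times\mathbb{A}^{1}_{K}$ has regular second coordinate $\tilde f\in\mathcal{O}(X)$, and functoriality of analytification gives $\sigma^{\mathrm{an}}=s$, whence $\tilde f^{\mathrm{an}}=f$. As $\mathcal{O}(X)\hookrightarrow\mathcal{H}(X^{\mathrm{an}})$ is injective for reduced $X$, this realizes $f$ as the regular function $\tilde f$, proving the claimed integral closedness.

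The routine bookkeeping (monic $\Rightarrow$ finite flat, sections of separated maps are closed immersions, injectivity of $\mathcal{O}(X)\to\mathcal{H}(X^{\mathrm{an}})$) I expect to be painless. The main obstacle is assembling the non-archimedean avatars of the complex-analytic GAGA facts used above: that a smooth connected $K$-variety has connected analytification (so that $X^{\mathrm{an}}$ and each $W_{i}^{\mathrm{an}}$ are irreducible), that the set $W^{\mathrm{an}}$ decomposes along the algebraic components $W_{i}$, and that a finite morphism becoming an analytic isomorphism is already an algebraic isomorphism. These are precisely the points where the rigid-analytic theory (Conrad's work on irreducible components and on the analytification of normalization, together with faithful flatness of analytification) must replace the classical statements, and care is needed because connectedness and irreducibility behave more delicately for rigid spaces.
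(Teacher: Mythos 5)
Your proof is correct and takes essentially the same route as the paper: both convert the integral relation into a finite cover of $X$, interpret $f$ as an analytic section of its analytification, show that this section is a Zariski closed immersion, and conclude from the irreducibility and dimension theory of rigid analytifications (Conrad) that the section is surjective, whence $f$ is algebraic. The only implementation difference is that the paper takes the cover to be $\mathrm{Spec}\,A[f]$ with $A[f]\subset\mathcal{H}(X^{\mathrm{an}})$, which is automatically integral, so no component analysis is needed and the conclusion $A[f]=A$ is immediate, whereas you work with the full hypersurface $\mathrm{Spec}_X\bigl(\mathcal{O}_X[T]/(P)\bigr)$ and therefore need the component-selection step and the explicit conservativity of analytification on finite covers --- a descent step that the paper leaves implicit.
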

\begin{proof}
To check the integrality we can localize to the case where $X$ is affine.
Write $X=\Spec A$, so that $\mathcal{O}(X)=A$. Then $A \subset \mathcal{H}(X^{\mathrm{an}})$ is a subring.
Now let us consider an element $f \in \mathcal{H}(X^{\mathrm{an}})$ which is integral over $A$. We set $B=A[f]$. The goal is to show that $B=A$.\\[.2cm]
Let $Y = \Spec B$.
So we have a finite morphism $\pi:\, Y \to X$ between $K$-schemes. Since $X$ is irreducible and $B=A[f]$, we know that $Y$ is also irreducible.\\
We now consider the rigid analytification of $\pi:\,Y \to X$. 
We have the following diagram
\[
\xymatrix{
Y^{\mathrm{an}} \ar[r] \ar[d]_{\pi^{\mathrm{an}}} & Y \ar[d]_{\pi} \\
X^{\mathrm{an}} \ar[r] \ar[ru]^-s  & X.
}
\]
where $s:\, X^{\mathrm{an}} \to Y$ is the map between locally ringed spaces induced by the inclusion $B \hookrightarrow \mathcal{H}(X^{\mathrm{an}})$. By the universal property of the rigid analytification functor, we know that $s$ factors through $s^{\mathrm{an}}:\, X^{\mathrm{an}} \to Y^{\mathrm{an}}$, which is an analytic section of $\pi^{\mathrm{an}}$
\[
\xymatrix{
Y^{\mathrm{an}} \ar[d]^{\pi^{\mathrm{an}}} \\
X^{\mathrm{an}} \ar@/^1pc/[u]^{s^{\mathrm{an}}}
}
\]
We claim that $s^{\mathrm{an}}$ is a Zariski closed immersion of rigid analytic varieties. To see this, we only need to notice that $(\pi^{\mathrm{an}})^*:\, \mathcal{H}(X^{\mathrm{an}}) \to \mathcal{H}(Y^{\mathrm{an}})$ is injective and the following composition
\[
\mathcal{H}(X^{\mathrm{an}}) \xrightarrow{(\pi^{\mathrm{an}})^*} \mathcal{H}(Y^{\mathrm{an}}) \xrightarrow{(s^{\mathrm{an}})^*} \mathcal{H}(X^{\mathrm{an}})
\]
is the identity map ( here we use the fact that $s^{\mathrm{an}}$ is a section of $\pi^{\mathrm{an}}$ ). So $(s^{\mathrm{an}})^*$ is surjective and thus $s^{\mathrm{an}}$ is a Zariski closed immersion.\\ 
On the other hand, we know that $\mathrm{dim}\,Y^{\mathrm{an}} = \mathrm{dim}\,X^{\mathrm{an}}$ as $\pi^{\mathrm{an}}$ is finite. Both $X^{\mathrm{an}}$ and $Y^{\mathrm{an}}$ are irreducible since they are the analytification of irreducible $K$-schemes.  Thus $Y^{\mathrm{an}} = X^{\mathrm{an}}$. We conclude that $B=A$ and $f \in A$.
\end{proof}

\begin{prop}[Analogue of Proposition~2.3 in \cite{Java-Kuch}]\label{prop-2.3}
Let $\mathcal{X}$ be a normal rigid analytic space; let $\mathcal{A} \subset \mathcal{X}$ be a proper closed analytic subset. Then the ring $\mathcal{H}(\mathcal{X})$ is integrally closed in $\mathcal{H}(\mathcal{X} \setminus \mathcal{A})$.  
\end{prop}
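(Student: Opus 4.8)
The plan is to follow the complex-analytic argument of Javanpeykar--Kucharczyk, replacing the classical Riemann extension theorem by its non-archimedean counterpart. Let $f \in \mathcal{H}(\mathcal{X} \setminus \mathcal{A})$ be integral over $\mathcal{H}(\mathcal{X})$, so that there is a monic relation
\[
f^d + a_{d-1} f^{d-1} + \cdots + a_1 f + a_0 = 0
\]
on $\mathcal{X} \setminus \mathcal{A}$, with $a_0, \ldots, a_{d-1} \in \mathcal{H}(\mathcal{X})$. The goal is to show that $f$ extends to a global analytic function on $\mathcal{X}$. Since $\mathcal{X}$ is normal, hence reduced, and $\mathcal{X} \setminus \mathcal{A}$ is dense, such an extension is unique and the restriction map $\mathcal{H}(\mathcal{X}) \to \mathcal{H}(\mathcal{X} \setminus \mathcal{A})$ is injective, so it suffices to produce it.

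First I would pass to a local situation. Extendability is local on $\mathcal{X}$, so we may cover $\mathcal{X}$ by affinoid subdomains $\mathcal{U}$ and work on each $\mathcal{U}$ separately, glueing the resulting local extensions (they agree on overlaps by the density and uniqueness just noted). On such a $\mathcal{U}$ the coefficients $a_i$ are analytic, hence bounded. The key elementary fact is the non-archimedean bound on roots of a monic polynomial: at every point $x \in \mathcal{U} \setminus \mathcal{A}$ one has
\[
|f(x)| \le \max_{0 \le i \le d-1} |a_i(x)|^{1/(d-i)},
\]
because otherwise the ultrametric inequality would force $|f^d(x)| = |\sum_{i<d} a_i(x) f^i(x)| < |f^d(x)|$, a contradiction. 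Since the $a_i$ are bounded on $\mathcal{U}$, this shows that $f$ is bounded on $\mathcal{U} \setminus \mathcal{A}$.

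The decisive step is then to invoke the non-archimedean Riemann extension theorem (Bartenwerfer's \emph{erster Riemannscher Hebbarkeitssatz}): on a normal rigid analytic space, a bounded analytic function defined off a proper closed analytic subset extends to an analytic function on the whole space. Applying this to the bounded function $f$ on $\mathcal{U} \setminus \mathcal{A}$, with $\mathcal{U}$ normal, yields $\tilde f \in \mathcal{H}(\mathcal{U})$ with $\tilde f|_{\mathcal{U} \setminus \mathcal{A}} = f$. Glueing over the cover produces the desired global extension, and we conclude $f \in \mathcal{H}(\mathcal{X})$.

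The main obstacle is marshalling and correctly applying the non-archimedean extension theorem: one must check that its hypotheses match ours, in particular that it covers the codimension-one (hypersurface) case, where boundedness of $f$ is genuinely needed, rather than only the codimension $\ge 2$ situation in which no boundedness hypothesis is required. I expect the normality of $\mathcal{X}$ together with the boundedness established in the second step to be exactly what makes the extension theorem applicable, so that once the citation is in place the remaining bookkeeping (localization, glueing, uniqueness) is routine.
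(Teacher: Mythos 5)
Your proof is correct and takes essentially the same route as the paper: derive boundedness of $f$ near $\mathcal{A}$ from the monic integral relation (the ultrametric root bound), then invoke the non-archimedean Riemann extension theorem for bounded analytic functions off a proper closed analytic subset of a normal rigid space. The paper cites this Hebbarkeitssatz in the form of L\"utkebohmert (Theorem~1.6) and Conrad (p.~502) rather than Bartenwerfer, and leaves the localization, root-bound, and gluing bookkeeping implicit, but the argument is the same.
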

\begin{proof}
Let $f$ be an analytic function in $\mathcal{H}(\mathcal{X} \setminus \mathcal{A})$ and is integral over $\mathcal{H}(\mathcal{X})$, namely one can find $a_i \in \mathcal{H}(\mathcal{X})$ such that
\[
f^d + a_{d-1}f^{d-1} + \cdots + a_0 = 0.
\]
Then around each point of $\mathcal{A}$ the $a_i$ are bounded, hence so is $f$, and by the Hebbarkeitssatz (see {\cite[p. 502]{Con99}} or {\cite[Theorem~1.6]{Lut74}}) it can be extended to an analytic function on all of $\mathcal{X}$.  
\end{proof}
\begin{cor}[Analogue of Corollary~2.5 in \cite{Java-Kuch}]\label{int_close}
Let $X$ be a normal integral finite type scheme over $K$ and let $\mathcal{A} \subset X^{\mathrm{an}}$ be a proper closed analytic subset. Then the ring of regular functions $\mathcal{O}(X)$ is integrally closed in the ring of analytic functions $\mathcal{H}(X^{\mathrm{an}} \setminus \mathcal{A})$.  
\end{cor}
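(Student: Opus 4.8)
The plan is to deduce this corollary formally from the two preceding propositions by chasing integrality through the tower of inclusions
\[
\mathcal{O}(X) \subset \mathcal{H}(X^{\mathrm{an}}) \subset \mathcal{H}(X^{\mathrm{an}} \setminus \mathcal{A}),
\]
where the rightmost inclusion is by restriction (injective because $\mathcal{A}$ is a proper closed analytic subset of the irreducible space $X^{\mathrm{an}}$). So I would start with an $f \in \mathcal{H}(X^{\mathrm{an}} \setminus \mathcal{A})$ satisfying a monic equation with coefficients in $\mathcal{O}(X)$, and aim to land it back in $\mathcal{O}(X)$ in two steps: first moving it into $\mathcal{H}(X^{\mathrm{an}})$, then into $\mathcal{O}(X)$.

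For the first step, since the coefficients of the integral equation lie in $\mathcal{O}(X) \subset \mathcal{H}(X^{\mathrm{an}})$, the function $f$ is a fortiori integral over $\mathcal{H}(X^{\mathrm{an}})$. To invoke Proposition~\ref{prop-2.3} with $\mathcal{X} = X^{\mathrm{an}}$, I need to know that $X^{\mathrm{an}}$ is a normal rigid analytic space; granting this, Proposition~\ref{prop-2.3} (whose engine is the Hebbarkeitssatz) extends $f$ across $\mathcal{A}$ and places it in $\mathcal{H}(X^{\mathrm{an}})$. For the second step, this extended $f \in \mathcal{H}(X^{\mathrm{an}})$ is still integral over $\mathcal{O}(X)$; since $X$ is integral and of finite type over the field $K$, it is automatically equidimensional, hence of pure dimension, so Proposition~\ref{prop-2.2} applies verbatim and forces $f \in \mathcal{O}(X)$. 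This closes the argument.

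The one genuinely nonformal point—the \emph{hard part}—is the normality of the analytification $X^{\mathrm{an}}$, which is exactly where the normality hypothesis on $X$ is consumed. I would justify it through the comparison between the local rings of $X$ and of $X^{\mathrm{an}}$: at each point $x$ the map $\mathcal{O}_{X,x} \to \mathcal{O}_{X^{\mathrm{an}},x}$ is faithfully flat and induces an isomorphism on completions (a standard property of rigid analytification, cf.\ the comparison results of Bosch or Conrad), so normality of the algebraic local rings transfers to the analytic ones and $X^{\mathrm{an}}$ is normal. Everything else is a formal diagram chase, and no new estimates or constructions are needed beyond the two propositions already in hand.
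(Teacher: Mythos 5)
Your proof is correct and follows essentially the same route as the paper: normality of $X$ transfers to $X^{\mathrm{an}}$, Proposition~\ref{prop-2.3} extends the integral element across $\mathcal{A}$, and Proposition~\ref{prop-2.2} then lands it in $\mathcal{O}(X)$. The paper states this in one line; you have merely made explicit the details (injectivity of restriction, pure-dimensionality of an integral scheme, and the local-ring comparison giving normality of the analytification) that it leaves implicit.
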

\begin{proof}
Since $X$ is normal, it follows that $X^{\mathrm{an}}$ is normal. Therefore, the statement follows from Proposition~\ref{prop-2.2} and Proposition~\ref{prop-2.3}.  
\end{proof}
\subsection{Specialization lemma for power series}
The proof of Theorem~1.5 in \cite{Java-Kuch} needs a ``transcendental" specialization lemma for power series {\cite[Lemma~2.7]{Java-Kuch}}. We state it here in our situation.\\ 
Let $k \subset K$ be an algebraically closed subfield such that $K$ has
 infinite transcendence degree over $k$. 
Then for some chosen $\lambda_1,\dots,\lambda_n \in K$ which are algebraically independent over $k$, we can define the following ring homomorphism
\[
\iota=\iota_{\lambda_1,\dots,\lambda_n}:\, k[x_1,\dots,x_{n+1}] \to K[z_1,\dots,z_n]
\]
by letting $\iota|_k$ be the inclusion $k \hookrightarrow K$, sending $x_j$ to $z_j$ for $1 \leq j \leq n$, and sending $x_{n+1}$ to the linear polynomial $\lambda_1z_1 + \cdots + \lambda_nz_n$. This homomorphism extends naturally to
\[
\iota:\, k[\![x_1,\dots,x_{n+1}]\!] \to K[\![z_1,\dots,z_n]\!].
\]
%\begin{lem}[Lemma~2.7 in \cite{Java-Kuch}]\label{spec_lem1} $\iota$ is injective.  
%\end{lem}
\begin{lem}[Lemma~2.7 in \cite{Java-Kuch}]\label{spec_lem}
Let $g \in k[\![x_1,\dots,x_{n+1}]\!]$. If $\iota(g) \in K[\![z_1,\dots,z_n]\!]$ is an algebraic function (i.e. when interpreted as an element of the quotient field $K(\!(z_1,\dots,z_n)\!)$ it is algebraic over the subfield $K(z_1,\dots,z_n)$ ), then $g$ is an algebraic function (i.e. $g$ is an element of $k(\!(z_1,\dots,z_{n+1})\!)$ algebraic over $k(z_1,\dots,z_{n+1})$).  
\end{lem}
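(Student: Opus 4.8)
The plan is to read $\iota(g)=g(z_1,\ldots,z_n,\lambda_1 z_1+\cdots+\lambda_n z_n)$ as the restriction of $g$ to the \emph{generic} hyperplane $\{x_{n+1}=\sum_i\lambda_i x_i\}$ through the origin, and to exploit the fact that the $\lambda_i$ are algebraically independent over $k$ in order to ``undo'' this restriction. I would carry this out in three steps, concluding that $g$ is algebraic over $k(x_1,\ldots,x_{n+1})$.

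First I would descend the coefficient field from $K$ to $k':=k(\lambda_1,\ldots,\lambda_n)$. Since $\iota(g)$ actually lies in $k'[[z_1,\ldots,z_n]]$ while the hypothesis only gives algebraicity over $K(z_1,\ldots,z_n)$, I would take a relation $\sum_{i=0}^d P_i(z)\,\iota(g)^i=0$ with $P_i\in K[z]$, $P_d\neq 0$, and note that its coefficients involve only finitely many elements of $K$. Writing these as the image of indeterminates under a $k'$-algebra map $k'[\mu_1,\ldots,\mu_r]\to K$ with prime kernel $\mathfrak p$, the identity becomes $R(\mu;z)=\sum_i P_i(\mu;z)\iota(g)^i=0$ in $k'[\mu][[z]]$, so every $z$-coefficient of $R$ lies in $\mathfrak p$. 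Choosing a $\bar{k'}$-point $\nu$ of $V(\mathfrak p)$ at which the nonzero leading polynomial $P_d(\mu;z)$ does not vanish identically in $z$ — possible because $V(\mathfrak p)$ is irreducible — yields a nontrivial relation $\sum_i P_i(\nu;z)\iota(g)^i=0$ over $\bar{k'}(z)$, hence $\iota(g)$ is algebraic over $k'(z_1,\ldots,z_n)$.

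Next I would pass to a universal version. Because $\lambda_1,\ldots,\lambda_n$ are algebraically independent over $k$, there is a $k$-isomorphism $\sigma:k(t_1,\ldots,t_n)\xrightarrow{\sim}k'$, $t_i\mapsto\lambda_i$, extending coefficientwise to power series, sending $\tilde\iota(g):=g(z_1,\ldots,z_n,\sum_i t_i z_i)$ to $\iota(g)$ and $k(t)(z)$ to $k'(z)$. Thus the previous step is equivalent to: $\tilde\iota(g)$ is algebraic over $k(t_1,\ldots,t_n,z_1,\ldots,z_n)$, where now the $t_i$ are genuine indeterminates. Setting $w:=\sum_i t_i z_i$ and solving $t_1=(w-\sum_{j\ge 2}t_j z_j)/z_1$, I would rewrite the base field as $k(z_1,\ldots,z_n,w,t_2,\ldots,t_n)$, in which $\tilde\iota(g)$ is literally $g(z_1,\ldots,z_n,w)$. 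A transcendence-degree computation then shows that it suffices to prove that $t_2,\ldots,t_n$ are algebraically independent over $k(z_1,\ldots,z_n,w)(\tilde\iota(g))$: combined with algebraicity over $k(z,w,t_2,\ldots,t_n)$ this forces $\mathrm{trdeg}_{k(z,w)}\,k(z,w,\tilde\iota(g))=0$, i.e. $g(z_1,\ldots,z_n,w)$ is algebraic over $k(z_1,\ldots,z_n,w)$; renaming $z_i\mapsto x_i$, $w\mapsto x_{n+1}$ gives the conclusion.

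\textbf{The main obstacle} is precisely this last algebraic-independence statement, the rigorous form of the intuition that ``$g(z,w)$ does not involve $t_2,\ldots,t_n$''. I would establish it by proving that the substitution homomorphism $\Theta:k[[z_1,\ldots,z_n,w]][T_2,\ldots,T_n]\to k(t)((z))$, $z_i\mapsto z_i$, $w\mapsto\sum_i t_i z_i$, $T_j\mapsto t_j$, is injective: comparing lowest-order terms in $z$ reduces this to the injectivity of $(b_J)\mapsto\sum_J b_J(z,\sum_i t_i z_i)\,t^J$ on tuples of homogeneous polynomials, and applying the ring homomorphism $t_1\mapsto(w-\sum_{j\ge 2}t_j z_j)/z_1$ over $k[z_1^{\pm1},z_2,\ldots,z_n,t_2,\ldots,t_n,w]$ turns $\sum_i t_i z_i$ back into the free variable $w$, so that the vanishing of $\sum_J b_J(z,w)\,t^J$ forces each $b_J=0$. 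Clearing denominators then upgrades injectivity over $k[[z,w]]$ to the desired algebraic independence over its fraction field, completing the argument.
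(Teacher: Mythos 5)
Your proof is correct, but there is essentially nothing in the paper to compare it against: the paper does not prove this lemma. Immediately after the statement it remarks that the proof of \cite[Lemma~2.7]{Java-Kuch} works for any field extension $k \subset K$ of infinite transcendence degree, and refers the reader to that paper. Your argument is therefore best viewed as a complete, self-contained replacement for that citation, and it follows the broad strategy one would extract from the source: first descend the polynomial relation from $K(z)$ to $k'(z)$ with $k'=k(\lambda_1,\dots,\lambda_n)$, by spreading out the finitely many coefficients over $k'[\mu_1,\dots,\mu_r]$ and specializing at a suitable $\overline{k'}$-point of the irreducible variety $V(\mathfrak p)$ where the leading coefficient survives; then use the algebraic independence of the $\lambda_i$ to replace them by indeterminates $t_i$; finally rewrite $k(t,z)=k(z_1,\dots,z_n,w,t_2,\dots,t_n)$ with $w=\sum_i t_iz_i$ and conclude by a transcendence-degree count. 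You correctly isolate the crux, namely that $t_2,\dots,t_n$ remain algebraically independent over $k(z,w)(\tilde\iota(g))$, equivalently the injectivity of the substitution map $\Theta$ on $k[[z_1,\dots,z_n,w]][T_2,\dots,T_n]$, and your proof of this (reduction to homogeneous components in $z$, then the homomorphism $t_1\mapsto (w-\sum_{j\ge 2}t_jz_j)/z_1$ into $k[z_1^{\pm 1},z_2,\dots,z_n,t_2,\dots,t_n,w]$, which frees the variable $w$ and lets one compare coefficients of the monomials $t^J$) is valid. Note that this same injectivity is also what legitimizes the ``i.e.'' in your third step, transporting algebraicity of $\tilde\iota(g)$ over the copy of $k(z,w)$ inside $k(t)((z))$ back to algebraicity of $g$ over $k(x_1,\dots,x_{n+1})$; it would be worth saying so explicitly. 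One wording slip to fix: $R(\mu;z)$ is not the zero element of $k'[\mu][[z]]$ --- what you mean, and what you then correctly use, is that its image in $K[[z]]$ vanishes, so that each $z$-coefficient of $R$ lies in $\mathfrak p$.
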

The proof of {\cite[Lemma~2.7]{Java-Kuch}} in fact works for any infinite transcendence degree field extension $k \subset K$. So we omit the proof here and refer the reader to their paper.

\subsection{Dimension reduction}
The proof of Theorem~1.5 in \cite{Java-Kuch} is by induction on the dimension of the source spaces. So in this subsection we first show a non-archimedean counterpart of Proposition~3.6 (Dimension Lemma) in \cite{Java-Kuch}:
\begin{prop}[Dimension Lemma]\label{dim-lem}
Let $V$ and $X$ be algebraic varieties defined over $K$, where $V$ is normal and has dimension at least two, and let $f:\, V^{\mathrm{an}} \to X^{\mathrm{an}}$ be a rigid  analytic map. Suppose that for every closed algebraic subvariety $H \subset V$ of codimension one, the composition
\[
\tilde{H}^{\mathrm{an}} \xrightarrow{\nu^{\mathrm{an}}} H^{\mathrm{an}} \hookrightarrow V^{\mathrm{an}} \xrightarrow{f} X^{\mathrm{an}}
\]
is an algebraic morphism, where $\nu$ is the normalization of schemes. Then $f$ itself is algebraic.  
\end{prop}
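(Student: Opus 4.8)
The plan is to reduce the statement, by the standard reduction steps of \cite{Java-Kuch}, to the local algebraicity of a single analytic function near a general point, and then to feed this into the specialization Lemma~\ref{spec_lem}. First I would reduce to the case where $X$ is affine and, by composing $f$ with the coordinate functions of an affine chart $X\hookrightarrow \mathbb{A}^m$, to the following situation: $V=\Spec A$ is normal affine of dimension $n+1\ge 2$, and $\phi\in\mathcal{H}(V^{\mathrm{an}})$ is a global analytic function (the pullback $f^{*}t_i$ of a coordinate) with $\phi|_{\tilde H}$ algebraic for every codimension-one $H\subset V$; the goal is to show $\phi\in A$. Since algebraicity of $f$ may be checked on a cover of $V$ by normal affine opens, treating each such $\phi$ suffices.

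Next I would analyze $\phi$ at a general point. Choosing a smooth point $v\in V$ (which exists, $V$ being normal hence smooth in codimension one) together with an \'etale algebraic chart $x_1,\dots,x_{n+1}$ near $v$ defined over a finitely generated subfield of $K$, I expand the germ of $\phi$ at $v$ as a power series $\widehat\phi=\sum_\alpha c_\alpha x^\alpha$ with $c_\alpha\in K$. The crucial observation is that there are only \emph{countably many} coefficients $c_\alpha$; hence, letting $k$ be the algebraic closure of the field generated over $\mathbb{Q}$ by the $c_\alpha$ together with the finite data defining $V$, $v$ and the chart, the field $k$ is countably generated. Since $K$ is complete for a nontrivial non-archimedean valuation it is uncountable, so $K$ has infinite transcendence degree over $k$; moreover $\widehat\phi\in k[\![x_1,\dots,x_{n+1}]\!]$ by construction.

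With this subfield fixed I would run the specialization argument. Pick $\lambda_1,\dots,\lambda_n\in K$ algebraically independent over $k$ and let $H_\lambda\subset V$ be the algebraic hypersurface through $v$ cut out in the chart by $x_{n+1}=\lambda_1 x_1+\cdots+\lambda_n x_n$; for general $\lambda$ it is irreducible and smooth at $v$. Writing $z_j=x_j|_{H_\lambda}$, the germ of $\phi|_{\tilde H_\lambda}$ at $v$ is precisely $\iota(\widehat\phi)$ for the homomorphism $\iota=\iota_{\lambda_1,\dots,\lambda_n}$ of Lemma~\ref{spec_lem}. By hypothesis this restriction is algebraic, so $\iota(\widehat\phi)$ is algebraic over $K(z_1,\dots,z_n)$, and Lemma~\ref{spec_lem} forces $\widehat\phi$ to be algebraic over $k(x_1,\dots,x_{n+1})\subset K(V)$. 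Thus the germ of $\phi$ at $v$ satisfies a nontrivial polynomial relation $P(\phi)=0$ with $P\in K(V)[T]$.

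Finally I would globalize. As $V$ is irreducible, $V^{\mathrm{an}}$ is irreducible, so the identity $P(\phi)=0$, valid near $v$, propagates over the connected complement of the poles of the coefficients of $P$; hence $\phi$ is algebraic over $K(V)$. A clearing-of-denominators argument then shows that $a\phi$ is integral over $A$ for some nonzero $a\in A$, and the integral-closedness results of this section (Proposition~\ref{prop-2.2}, Corollary~\ref{int_close}), together with the normality of $V$, upgrade this to $\phi\in A=\mathcal{O}(V)$; this proves $\phi$, and with it $f$, algebraic. The step I expect to be the main obstacle is the descent underlying the second and third paragraphs: arranging the germ of $\phi$ to lie over a subfield $k$ small enough that $K$ retains infinite transcendence degree over it, and carefully matching the local analytic hyperplanes fed into Lemma~\ref{spec_lem} with the \emph{global} algebraic hypersurfaces $H_\lambda$ to which the hypothesis on $f$ applies (in particular checking that the normalization $\nu$ is an isomorphism near $v$, so that $\phi|_{\tilde H_\lambda}$ genuinely computes $\iota(\widehat\phi)$). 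Verifying the transcendence-degree count and these compatibilities is where the care is needed.
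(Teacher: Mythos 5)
Your proposal is correct and is essentially the paper's own argument: both reduce to showing that the coordinate functions pulled back along $f$ are algebraic, expand their germ at a suitable point in \'etale coordinates defined over a countable algebraically closed subfield $k\subset K$ (over which $K$ then has infinite transcendence degree), cut $V$ by the $k$-generic hyperplane $x_{n+1}=\lambda_1x_1+\cdots+\lambda_nx_n$ so that the hypothesis on codimension-one subvarieties feeds exactly into Lemma~\ref{spec_lem}, and finish with the integral-closedness statement of Corollary~\ref{int_close}. The only cosmetic differences are that the paper realizes the chart by a Noether normalization $\pi\colon V\to\mathbb{A}^{n+1}$ chosen \'etale over $0$ (so the hypersurface $H$ is an irreducible component of the closed set $\pi^{-1}(P)$), and that instead of ``reducing to $X$ affine'' it keeps the analytic subset $\mathcal{A}=f^{-1}((X\setminus U)^{\mathrm{an}})$ in play, working with functions in $\mathcal{H}(V^{\mathrm{an}}\setminus\mathcal{A})$ and a base point $\tilde{0}\notin\mathcal{A}$ --- which is precisely why Corollary~\ref{int_close} is stated for the complement of an analytic subset; your explicit bookkeeping of the countably many Taylor coefficients is, if anything, more careful than the paper's wording of that step.
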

\begin{proof}%[sketch of the proof]
We first choose a Zariski open subset $U \subset X$ which admits an embedding $j$ to some affine space $\mathbb{A}^m$. Denote by $\mathcal{A}$ the pull back of the complement $X \setminus U$ via the rigid analytic map $f$. Then we get analytic functions
\[
g=(g_1,\dots,g_m):\, V^{\mathrm{an}} \setminus \mathcal{A} \xrightarrow{f} U^{\mathrm{an}} \xrightarrow{j^{\mathrm{an}}} (\mathbb{A}^m_K)^{\mathrm{an}}.
\]
The goal is to show that all $g_i$'s are algebraic, i.e. that $g_i$'s are rational functions on $V$. To use the algebraicity assumption in Proposition~\ref{dim-lem}, we shall choose a suitable subvariety $H \subset V$ of codimension one.\\
We consider the Noether normalization $\pi:\, V \to \mathbb{A}^{n+1}$ (here $n+1 = \mathrm{dim}\, V>1$). One can choose a countable algebraically closed subfield $k \subset K$ so that $\pi$ is defined over $k$. Next we define the ``$k$-generic hyperplane" of $\mathbb{A}^{n+1}_K$
\[
P := \{ (z_1,\dots,z_{n+1})\in \mathbb{A}^{n+1}_K\,\,|\,\, z_{n+1} = \lambda_1z_1 +\cdots+\lambda_nz_n \}
\]
for $\lambda_i \in K$ algebraically independent over $k$. Then the induced homomorphism between complete local rings $\hat{\mathcal{O}}_{\mathbb{A}^{n+1}_k,0} \to \hat{\mathcal{O}}_{P,0}$ is exactly the map $\iota=\iota_{\lambda_1,\dots,\lambda_{n}}$ studied in Lemma~\ref{spec_lem}.\\
Now we choose $H \subset V$ to be some irreducible component of $\pi^{-1}(P)$, which has codimension one in $V$. Denote by $\tilde{H}$ the normalization. By our  algebraicity assumption, the restriction of $g_i$'s on $\tilde{H}^{\mathrm{an}} \setminus \mathcal{A}$, which we denote by $h_i$'s, are in fact algebraic.\\
We can choose $\pi$ at the beginning such that it is \'etale over $0 \in \mathbb{A}^{n+1}$ (and choose a preimage $\tilde{0}$ of $0$ such that $\tilde{0} \notin \mathcal{A}$). Then we have the following commutative diagram
\[
\xymatrix{
(V_k,\tilde{0}) \ar[d]^{\pi} & (\tilde{H},\tilde{0}) \ar[l] \ar[d]^{\pi|_{\tilde{H}}} \\
(\mathbb{A}^{n+1}_k,0) & (P,0) \ar[l]
}
\]
which induces the following homomorphisms of complete local rings
\[
\xymatrix{
\hat{\mathcal{O}}_{V_k,\tilde{0}} \ar[r]  & \hat{\mathcal{O}}_{\tilde{H},\tilde{0}}\\
k[\![x_1,\dots,x_{n+1}]\!] \ar[r]^{\iota} \ar[u]^{\pi^*}_{\cong} & K[\![z_1,\dots,z_n]\!] \ar[u]^{\cong}_{\pi^*}.
}
\]
Note that via $\pi^*$ we can identify the germs of $g_i$ in $\hat{\mathcal{O}}_{V_k,\tilde{0}}$ as formal power series in $k[\![x_1,\dots,x_{n+1}]\!]$. Here we use the property of the analytification functor $\hat{\mathcal{O}}_{V,\tilde{0}} \cong \hat{\mathcal{O}}_{V^{\mathrm{an}},\tilde{0}}$.
The image $\iota(g_i) \in K[\![z_1,\dots,z_n]\!]$ exactly corresponds to the germ of $h_i$, which is algebraic by our assumption. Thus by Lemma~\ref{spec_lem} we know that $g_i$ is integral over $k(x_1,\dots,x_{n+1})$. After shrinking $V$ to some Zariski open subset, we can assume that the finite morphism $\pi:\, V \to \mathbb{A}^{n+1}$ is \'etale onto its image, and thus the integrality of $g_i$ over $k(x_1,\dots,x_{n+1})$ implies that $g_i$ is integral over $\mathcal{O}(V_k)$ as an element in $\mathcal{H}(V^{\mathrm{an}} \setminus \mathcal{A})$.
From Corollary~\ref{int_close} we know that $\mathcal{O}(V_k)$ is integrally closed and thus $g_i$ is a regular function on $V$.
\end{proof}

\begin{proof}[Proof of Theorem~\ref{K-testing}]
We only need to show that $(\mathrm{ii}) \Longrightarrow (\mathrm{i})$.
Using the Dimension Lemma (Proposition~\ref{dim-lem}) and the induction on the dimension of the source space, one easily obtains the $K$-analytic Borel hyperbolicity of $X$.
\end{proof}

\section{Tools from non-archimedean Nevanlinna theory}
To show Borel hyperbolicity one needs certain extension theorem for analytic maps. We shall recall here the rigid big Picard theorem of Cherry-Ru, which can be regarded as a non-archimedean counterpart of Lu's extension theorem \cite[\S4, Lemma~3]{Lu-91}.\\ 
Let $K$ be an algebraically closed field of characteristic $0$ which is complete with respect to some {non-trivial} non-archimedean valuation $|\cdot|_K$. Typical examples of $K$ are $\mathbb{C}_p$ and $\overline{\mathbb{C} (\!(t)\!)}$. Denote by $A^1[r_1,r_2):= \{z \in K\,:\, r_1 \leq |z|_K <r_2 \}$ the annulus.
\begin{thm}[Cherry-Ru {\cite[Theorem~6.1]{Cherry-Ru}}]\label{extension}
Let $X$ be a smooth projective variety over $K$. Let $D$ be a simple normal crossing divisor on $X$. Let $f:\, A^1(0,R] \to X^{\mathrm{an}} \setminus D^{\mathrm{an}}$ be a rigid analytic morphism. If there exists a section $\omega$ in $H^0(X, \Omega^1_X(\log D)^{\otimes s})$ for some natural number $s$ such that $f^*\omega \not\equiv 0$ and $\omega$ vanishes along an ample divisor $A$ on $X$ (so $s \geq 1$), then $f$ extends to a rigid analytic morphism from $A^1[0,R]$ to $X^{\mathrm{an}}$.   
\end{thm}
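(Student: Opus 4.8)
The plan is to recast the statement in the language of non-archimedean value distribution theory and to show that the hypotheses force the growth of $f$ near the puncture to be at most logarithmic, which excludes an essential singularity and produces the extension. First I would pull back $\omega$. Since $f$ maps $A^1(0,R]$ into $X^{\mathrm{an}} \setminus D^{\mathrm{an}}$, the logarithmic poles of $\omega$ along $D$ are never reached, so $f^*\omega$ is a genuine analytic $s$-fold differential on the punctured disk, of the form
\[
f^*\omega = \phi(z)\,(\mathrm{d}z)^{\otimes s},
\]
with $\phi$ analytic on $A^1(0,R]$ and $\phi \not\equiv 0$ by hypothesis. The assertion is local at $z=0$: writing $|g|_r := \max_n |a_n|\,r^n$ for the maximum modulus of an analytic $g = \sum_n a_n z^n$ on the circle $|z| = r$, a function extends across the puncture exactly when its Laurent expansion has no negative terms, and it is meromorphic at $0$ precisely when $\log|g|_r$ grows at most linearly in $\log(1/r)$ as $r \to 0$, an essential singularity being characterized by superlinear growth. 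So it suffices to bound the growth of the coordinate functions of $f$.

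Next I would introduce the height machinery. Since $A$ is ample, a large multiple $mA$ embeds $X$ into some $\mathbb{P}^M$, and $f$ acquires homogeneous coordinates $[f_0 : \cdots : f_M]$ with the $f_i$ analytic on $A^1(0,R]$. Define the characteristic function $T_f(r)$ of $f$ with respect to $\mathcal{O}_X(A)$ from the quantities $\log\max_i |f_i|_r$; then $T_f(r) = O(\log(1/r))$ as $r \to 0$ is exactly equivalent to the sought meromorphy. The mechanism linking $\omega$ to this bound is a \emph{tautological inequality}. Because $\omega$ vanishes along $A$, it is in fact a section of $\Omega^1_X(\log D)^{\otimes s} \otimes \mathcal{O}_X(-A)$, so the trivialization producing $\phi$ carries the ample class $\mathcal{O}_X(A)$; expanding $f^*\omega$ through the local coordinates and their derivatives $f_i'$ then yields an estimate of the shape
\[
T_f(r) \le \sum_i m\bigl(r,\, f_i'/f_i\bigr) + O(1),
\]
in which the right-hand side collects only the proximity contributions of the logarithmic derivatives of the components.

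The decisive input is the non-archimedean lemma on the logarithmic derivative. One has the clean bound $|g'|_r \le r^{-1}\,|g|_r$ for every analytic $g$ (using $|n|_K \le 1$ for $n \in \mathbb{Z}$), with no exceptional radii and no dependence on the size of $g$; hence each term $m(r, f_i'/f_i)$ is $O(\log(1/r))$. Substituting this into the tautological inequality gives
\[
T_f(r) = O\bigl(\log(1/r)\bigr) \qquad (r \to 0),
\]
which is precisely the growth rate of a meromorphic map. Consequently the Laurent expansions of the ratios $f_i/f_j$ have only finitely many negative terms, so $f$ extends to a meromorphic map $A^1[0,R] \dashrightarrow \mathbb{P}^M$. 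Finally, because $f$ already lands in the projective variety $X$ away from the puncture, the valuative criterion of properness (equivalently, rigid-analytic properness of $X^{\mathrm{an}}$) upgrades this to an honest rigid analytic morphism $A^1[0,R] \to X^{\mathrm{an}}$.

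The step I expect to be the main obstacle is the tautological inequality in the annular non-archimedean setting. One has to account simultaneously for the log structure $(X,D)$ and for the ample vanishing of $\omega$, and, most delicately, to obtain all estimates \emph{uniformly as} $r \to 0^+$, since the conclusion concerns the behavior at the inner boundary of the punctured disk rather than at a generic radius; this is where the nonvanishing $f^*\omega \not\equiv 0$ must be leveraged to prevent the positivity contributed by $A$ from being washed out. A secondary point requiring care is verifying that meromorphy of the projective coordinates, combined with the avoidance of $D$, genuinely yields a morphism into $X^{\mathrm{an}}$ rather than merely into the ambient projective space.
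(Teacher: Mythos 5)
You should first be clear about the benchmark: the paper contains no proof of Theorem~\ref{extension} at all. It is imported verbatim as a black box from Cherry--Ru \cite[Theorem~6.1]{Cherry-Ru}, and its only role in the paper is as an input to Proposition~\ref{max-len-map}. So the comparison is with Cherry--Ru's original argument, and your outline does follow its genuine strategy: non-archimedean value distribution on the punctured disk, the exceptional-set-free logarithmic derivative bound $|g'|_r \le r^{-1}|g|_r$, the equivalence between meromorphy at $0$ and the growth bound $T_f(r)=O(\log(1/r))$ for Laurent series, and the final upgrade from a meromorphic map into $\mathbb{P}^M$ to a morphism into $X^{\mathrm{an}}$ (a meromorphic map from a smooth rigid curve has no indeterminacy, and the limit point lies in $X^{\mathrm{an}}$ because $X$ is closed in $\mathbb{P}^M$). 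Those bookends are correct.

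The genuine gap is the middle: the ``tautological inequality'' is not a lemma you may defer --- it \emph{is} the theorem --- and the form you wrote, $T_f(r)\le \sum_i m(r,f_i'/f_i)+O(1)$, is not what pulling back $\omega$ produces; the homogeneous coordinates $f_i$ are not the functions whose logarithmic derivatives appear. The honest route is: the hypothesis says $\omega\in H^0(X,\Omega^1_X(\log D)^{\otimes s}\otimes\mathcal{O}_X(-A))$, so (after replacing $\omega$ by a tensor power if the embedding uses a multiple of $A$) each product $\omega\otimes X_i$ with a homogeneous coordinate $X_i$ is an honest global section of a log-symmetric power of $\Omega^1_X(\log D)$. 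After trivializing $f^*\mathcal{O}_X(A)$ on $A^1(0,R]$ --- itself a point needing justification, both here and for the existence of your coordinate lift $(f_0,\dots,f_M)$ --- one gets $\phi\cdot f_i=\psi_i$, where $\psi_i(\mathrm{d}z)^{\otimes s}=f^*(\omega\otimes X_i)$ and $\phi(\mathrm{d}z)^{\otimes s}=f^*\omega$, hence
\[
\log|f_i|_r \;\le\; \log^+|\psi_i|_r \;-\; \log|\phi|_r .
\]
Two estimates then remain, and they are the actual content of Cherry--Ru's proof: (a) $-\log|\phi|_r=O(\log(1/r))$, which is exactly where $f^*\omega\not\equiv 0$ enters, since a nonzero Laurent series satisfies $|\phi|_r\ge |a_{n_0}|r^{n_0}$ for any fixed index $n_0$ with $a_{n_0}\ne 0$; and (b) $\log^+|\psi_i|_r=O(\log(1/r))$ uniformly as $r\to 0$, i.e.\ the proximity of the pullback of a \emph{global} log-symmetric differential is logarithmically small. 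Estimate (b) is not the one-line bound $|g'|_r\le r^{-1}|g|_r$: to invoke that bound you must express $\omega\otimes X_i$ in finitely many coordinate charts adapted to $D$, and the coefficient functions and the relevant chart change with the position of $f(z)$, so the estimate has to be made uniform near the puncture by a covering argument. This uniformity, together with (a), is precisely what the body of Cherry--Ru's paper supplies; without it, and with the inequality only stated schematically, your proposal is a correct plan rather than a proof.
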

We shall use the map (\ref{tau-m}) to produce some symmetric differential vanishing along an ample divisor, as required in Cherry-Ru's extension theorem. 
%{One immediately notices that all the Viehweg-Zuo constructions are over complex numbers. Therefore we need to descend all the complex algebraic construction to objects over some finitely generated subfield and then base change them to $K$.}  

\section{From complex to $K$-coefficients}\label{spreading-out}
Let $K$ be a complete algebraically closed non-archimedean valued field of characteristic zero. Let $U_K$ be a smooth quasi-projective variety over $K$ which carries a morphism of stacks $U_K \to \mathcal{M}_h\otimes K$ generically finite onto its image. We denote by $\pi_K:\,V_K \to U_K$ the induced family of polarized varieties with maximal variation of moduli. 
We shall study the algebraicity of a rigid analytic morphism $f$ from $S^{\mathrm{an}}$, the rigid analytification of a finite type reduced scheme over $K$, to $U^{\mathrm{an}}_K$.\\
By Theorem~\ref{K-testing}, it suffices to study these rigid analytic morphisms $f$ whose source spaces $C^{\mathrm{an}}_K$ are curves.
To verify the algebraicity, we need to show that $f$ extends to a rigid analytic morphism between compactified spaces $\bar{C}^{\mathrm{an}}_K$ and $\bar{U}^{\mathrm{an}}_K$, and then apply the rigid GAGA theorem.\\[.2cm]
The first step is to construct the graded Higgs bundle used in the  Viehweg-Zuo construction.
Like families over complex numbers, we find smooth compactification $g_K:\, (X_K,\Delta_K) \to (Y_K,S_K)$ of $\pi_K:\, V_K \to U_K$. We define the graded Higgs bundle $(F_K,\tau_K)$ in the same manner (cf. Section~\ref{VZ-construction}). Note that all these constructions are purely algebraic, which is independent of the field of definition. We also have the iteration of Higgs maps
\[
\tau^k_K:\, \mathrm{Sym}^kT_{Y_K}(-\log S_K) \to F^{n-k,k}_K
\]
for $k=1,2,\dots,n$.\\

{The next step is to descend everything to a finitely generated subfield of $K$.
Notice that $X_K, Y_K$ are finite type $K$-schemes, the sheaves $F^{p,q}$ are coherent $\mathcal{O}_{Y_K}$-sheaves, and $\tau^{p,q}_K$ are morphisms between them. Therefore one can find a subfield $L \subset K$ which is finitely generated over $\mathbb{Q}$, and an $L$-model $(\mathcal{X},\Delta_{L}) \to (\mathcal{Y},\mathcal{S})$ of the family, as well as the $L$-model $(\mathcal{F},\tau)$ of the graded Higgs bundle.
Since $L$ is finitely generated over $\mathbb{Q}$, it can be embedded into $\mathbb{C}$ as a subfield.
By base change we obtain a family $g_{\mathbb{C}}:\, (X_{\mathbb{C}},\Delta_{\mathbb{C}}) \to (Y_{\mathbb{C}},S_{\mathbb{C}})$ over complex numbers, the graded Higgs bundle $(F_{\mathbb{C}},\tau_{\mathbb{C}})$, as well as the iteration of Higgs maps
\[
\tau^k_{\mathbb{C}}:\, \mathrm{Sym}^kT_{Y_{\mathbb{C}}}(-\log S_{\mathbb{C}}) \to F^{n-k,k}_{\mathbb{C}}.
\]
Now we apply the result of Viehweg-Zuo. Notice that the classifying map $\varphi_{\mathbb{C}}:\, U_{\mathbb{C}} \to \mathcal{M}_h \otimes \mathbb{C}$ is generically finite onto its image since its $L$-model is.\\
Let $m$ be the Griffiths-Yukawa coupling length of the complex family $g_{\mathbb{C}}$ with maximal variation. Then by Theorem~\ref{VZ-big-sheaf}, $\tau^m_{\mathbb{C}}$ factors through
\[
\mathrm{Sym}^mT_{Y_{\mathbb{C}}}(-\log S_{\mathbb{C}}) \to \mathcal{P}^{\vee}_{\mathbb{C}},
\]
where $\mathcal{P}_{\mathbb{C}}$ is big in the sense of Viehweg. Thus one can find an ample invertible sheaf $\mathcal{H}_{\mathbb{C}}$, some positive integer $\eta$ and a morphism 
\[
\mathrm{Sym}^{\eta} \mathcal{P}^{\vee}_{\mathbb{C}}
 \longrightarrow \bigoplus \mathcal{H}^{\vee}_{\mathbb{C}}
\]
which is injective over some Zariski open subset. The composed map $\mathrm{Sym}^{m\eta}T_{Y_{\mathbb{C}}}(-\log S_{\mathbb{C}}) \to \bigoplus \mathcal{H}^{\vee}_{\mathbb{C}}$ gives us plenty of symmetric differentials vanishing along some ample divisor, just as required in Theorem~\ref{extension}. But before applying Cherry-Ru's extension theorem we have to first ``transform'' those symmetric differentials back to the non-archimedean field $K$.\\
Note that although Viehweg-Zuo used some transcendental methods from Hodge theory to derive the bigness of $\mathcal{P}_{\mathbb{C}}$, the objects $\tau^k_{\mathbb{C}}$ and $\mathcal{H}_{\mathbb{C}}$ are purely algebraic. That means, we can enlarge the finitely generated subfield $L$ such that the ample invertible sheaf $\mathcal{H}_{\mathbb{C}}$ as well as the morphism $\mathrm{Sym}^{m\eta}T_{Y_{\mathbb{C}}}(-\log S_{\mathbb{C}}) \to \bigoplus \mathcal{H}^{\vee}_{\mathbb{C}}$ are also defined over $L$. 
So the ample invertible sheaf $\mathcal{H}_{\mathbb{C}}$ as well as the map can be base changed to the original compactified base space $Y_K$ over $K$ via the field embedding $L \subset K$.\\
Therefore we obtain the composed map over $K$
\begin{align}\label{sym-diff}
\mathrm{Sym}^{m\eta} T_{Y_K}(-\log S_K) \to \mathrm{Sym}^{\eta}\mathcal{P}^{\vee}_K \to \bigoplus \mathcal{H}^{\vee}_K
\end{align}
where the second map is injective over a Zariski open subset. Note that $\mathcal{H}_K$ is still ample. 
%One can assume $\mathcal{H}_K$ is still ample by enlarging $L$ such that the projective embedding induced by $\mathcal{H}^{\otimes k}_{\mathbb{C}}$ for some $k \in \mathbb{N}$ is also defined over $L$.} 
\begin{defi}
We say a rigid analytic morphism $f:\, C^{\mathrm{an}}_K \to U^{\mathrm{an}}_K$ has \emph{maximal length of Griffiths-Yukawa coupling} if the following composed map
\begin{align}\label{max-len}
T^{\otimes m}_{C^{\mathrm{an}}_K} \xrightarrow{df^{\otimes m}} f^*\mathrm{Sym}^{m} T_{Y_K}(-\log S_K) \to f^*\mathcal{P}^{\vee}_K
\end{align}
is nonzero.  
\end{defi}

\begin{prop}\label{max-len-map}
Let $V_K \to U_K$ be a smooth family of polarized varieties with semi-ample canonical divisor. 

Assume that the induced classifying map from $U_K$ to the moduli space is generically finite onto its image. Then any rigid analytic morphism $f:\, C^{\mathrm{an}}_K \to U^{\mathrm{an}}_K$ with maximal length of Griffiths-Yukawa coupling is algebraic.  
\end{prop}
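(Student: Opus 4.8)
The plan is to extend $f$ across the punctures of the curve by means of the big Picard theorem of Cherry--Ru (Theorem~\ref{extension}), and then to deduce algebraicity from rigid GAGA. By Theorem~\ref{K-testing} I may assume $C_K$ is a smooth connected algebraic curve; write $\bar{C}_K$ for its smooth projective compactification and $\{p_1,\dots,p_r\}=\bar{C}_K\setminus C_K$ for the set of punctures. Since $f$ has maximal length of Griffiths--Yukawa coupling, the map \eqref{max-len} is nonzero, so in particular $df\not\equiv 0$ and $f$ is non-constant. Embedding $U_K$ in its compactification $Y_K$ with simple normal crossing boundary $S_K=Y_K\setminus U_K$, I regard $f$ as a rigid analytic morphism $C_K^{\mathrm{an}}\to Y_K^{\mathrm{an}}\setminus S_K^{\mathrm{an}}$.

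First I would manufacture the symmetric differential required by Theorem~\ref{extension} out of the map \eqref{sym-diff}. Dualizing the bigness inclusion of Theorem~\ref{VZ-big-sheaf} produces, on a summand, a nonzero sheaf map $\mathcal{H}_K\to\mathrm{Sym}^{m\eta}\Omega^1_{Y_K}(\log S_K)$ factoring through $\mathrm{Sym}^{\eta}\mathcal{P}_K$ and injective over a dense Zariski open $W\subset U_K$. After replacing $\mathcal{H}_K$ by a positive power if necessary, I pick a section $t\in H^0(Y_K,\mathcal{H}_K)$ whose zero divisor $A$ is ample and whose analytic vanishing locus does not contain $f(C_K^{\mathrm{an}})$, and I let $\omega$ be the image of $t$. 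Then $\omega\in H^0(Y_K,\mathrm{Sym}^{m\eta}\Omega^1_{Y_K}(\log S_K))$ is a symmetric differential of degree $s=m\eta\geq 1$ vanishing along the ample divisor $A$, precisely as required.

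The key step, and the one I expect to be the main obstacle, is to check that $f^*\omega\not\equiv 0$. Because $T_{C_K}$ is a line bundle, the $\eta$-th symmetric power of the nonzero map \eqref{max-len} is a nonzero map $T_{C_K}^{\otimes m\eta}\to f^*\mathrm{Sym}^{\eta}\mathcal{P}_K^{\vee}$ (in characteristic zero the $\eta$-th symmetric power of a nonvanishing local section of a vector bundle is again nonvanishing), and by the compatibility of the natural polarization map $\mathrm{Sym}^{m\eta}T_{Y_K}(-\log S_K)\to\mathrm{Sym}^{\eta}\bigl(\mathrm{Sym}^{m}T_{Y_K}(-\log S_K)\bigr)$ with $df^{\otimes m\eta}$, this agrees up to a nonzero scalar with the composite of $df^{\otimes m\eta}$ and the first arrow of \eqref{sym-diff}. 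Pairing against $\omega$ then reduces $f^*\omega\not\equiv 0$ to the assertion that this nonzero section of $f^*\mathrm{Sym}^{\eta}\mathcal{P}_K^{\vee}$ survives the second, generically injective arrow of \eqref{sym-diff} into $\bigoplus\mathcal{H}_K^{\vee}$ and pairs nontrivially with $f^*t$. The delicate point is the degeneracy locus $Z=U_K\setminus W$: the second arrow is injective only over $W$, so I must rule out $f(C_K^{\mathrm{an}})\subseteq Z^{\mathrm{an}}$. Granting $f(C_K^{\mathrm{an}})\not\subseteq Z^{\mathrm{an}}$, the kernel of $\mathrm{Sym}^{\eta}\mathcal{P}_K^{\vee}\to\bigoplus\mathcal{H}_K^{\vee}$ pulls back to a subsheaf supported on the finite set $f^{-1}(Z)$, through which a nonzero section of the torsion-free sheaf $f^*\mathrm{Sym}^{\eta}\mathcal{P}_K^{\vee}$ cannot factor; hence some summand survives and, since $f^*t\not\equiv 0$, one gets $f^*\omega\not\equiv 0$. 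Justifying $f(C_K^{\mathrm{an}})\not\subseteq Z^{\mathrm{an}}$ (for instance by exploiting that maximal length already yields nonvanishing into the globally defined sheaf $\mathcal{P}_K^{\vee}$, or by taking $W$ as large as possible) is the crux of the whole argument.

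With such an $\omega$ in hand the extension is routine. Around each puncture $p_i$ I choose a rigid analytic identification of a punctured neighborhood with an annulus $A^1(0,R]$, so that the restriction of $f$ is a rigid analytic morphism $A^1(0,R]\to Y_K^{\mathrm{an}}\setminus S_K^{\mathrm{an}}$. Since $C_K^{\mathrm{an}}$ is connected and $f^*\omega$ is a nonzero analytic section, its restriction to this punctured neighborhood is not identically zero, so Theorem~\ref{extension} extends $f$ across $p_i$ to a rigid analytic morphism into $Y_K^{\mathrm{an}}$. Performing this at every puncture yields a rigid analytic morphism $\bar{f}\colon\bar{C}_K^{\mathrm{an}}\to Y_K^{\mathrm{an}}$ from a projective rigid analytic curve to a projective variety; by the rigid GAGA theorem $\bar{f}$ is algebraic, and therefore so is $f=\bar{f}|_{C_K^{\mathrm{an}}}$.
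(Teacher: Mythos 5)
Your overall route coincides with the paper's: take the $\eta$-th symmetric power of the nonzero map (\ref{max-len}), push it through the second arrow of (\ref{sym-diff}) so that it lands nontrivially in some copy of $\mathcal{H}_K^{\vee}$, interpret the corresponding component of (\ref{sym-diff}) as a symmetric differential vanishing along an ample divisor and with $f^*\omega \not\equiv 0$, then extend $f$ across the punctures by Theorem~\ref{extension} and conclude by rigid GAGA. Your additional bookkeeping (pairing with a section $t$ of $\mathcal{H}_K$ to obtain an honest, untwisted differential; choosing annulus coordinates at each puncture) is exactly what the paper leaves implicit, and is fine.

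The difference is the step you explicitly leave open: ruling out $f(C_K^{\mathrm{an}})\subseteq Z^{\mathrm{an}}$, where $Z$ denotes the degeneracy locus of the second arrow $\phi\colon \mathrm{Sym}^{\eta}\mathcal{P}_K^{\vee}\to \bigoplus \mathcal{H}_K^{\vee}$ of (\ref{sym-diff}). As written, your proof is conditional, so this is a genuine gap in your text; but you should be aware that the paper's own proof does not close it either --- it consists of the single assertion that nonvanishing of (\ref{max-len}) yields a nonzero composition $T^{\otimes m\eta}_{C_K^{\mathrm{an}}}\to f^*\mathcal{H}_K^{\vee}$, with no mention of the locus where $\phi$ fails to be fiberwise injective. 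Indeed, nonvanishing into $f^*\mathcal{P}_K^{\vee}$ does not formally imply nonvanishing into $\bigoplus f^*\mathcal{H}_K^{\vee}$ if the image of $f$ lies inside $Z$, since $f^*\phi$ can then kill everything. What defuses the issue is not the proposition's hypotheses but the way it is applied: in the proof of Theorem~\ref{main-thm} the paper first replaces $U_K$ by the Zariski closure of $f(C_K^{\mathrm{an}})$, so Proposition~\ref{max-len-map} is only ever invoked for maps with Zariski-dense image, for which $f(C_K^{\mathrm{an}})\subseteq Z^{\mathrm{an}}$ is impossible and your own argument closes. (The same density repairs a smaller slip in your argument: $f^*\mathrm{Sym}^{\eta}\mathcal{P}_K^{\vee}$ need not be torsion-free --- pullbacks to curves acquire torsion --- so one needs the section produced from (\ref{max-len}) to be nonzero at the generic point of $C_K^{\mathrm{an}}$, not merely nonzero, to ensure it does not factor through the torsion kernel of $f^*\phi$.) So your instinct that this is the crux is correct; a self-contained write-up should either add the Zariski-density of $f(C_K^{\mathrm{an}})$ as a hypothesis (matching the actual use of the proposition) or strengthen the definition of maximal Griffiths--Yukawa coupling to require that (\ref{max-len}) be generically nonzero.
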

\begin{proof}
Since the composed map (\ref{max-len}) is nonzero, one can find a copy of $\mathcal{H}^{\vee}_K$ in the direct sum appearing in the diagram (\ref{sym-diff}) such that the composition
\[
T^{\otimes m\eta}_{C^{\mathrm{an}}_K} \xrightarrow{df^{\otimes m\eta}} f^*\mathrm{Sym}^{m\eta} T_{Y_K}(-\log S_K) \to f^*\mathcal{H}^{\vee}_K
\]
is nonzero. In this way we have found a symmetric differential $\omega \in \Gamma(Y_K, \mathrm{Sym}^{m\eta} \Omega^1_{Y_K}(\log S_K) \otimes \mathcal{H}^{\vee}_K)$ which vanishes along the ample divisor associated to $\mathcal{H}_K$, and the pull back $f^*\omega \not\equiv 0$. Therefore, by Theorem~\ref{extension}, we can extend $f$ over all the points of  $\bar{C}^{\mathrm{an}}_K \setminus C^{\mathrm{an}}_K$. Now the algebraicity follows from the rigid GAGA theorem {\cite[Theorem~4.10.5]{FvP}}.
\end{proof}

Now we can state our main theorem.

\begin{thm}\label{main-thm}
Let $V_K \to U_K$ be a smooth family of polarized varieties with semi-ample canonical divisor. Suppose that the induced classifying map from $U_K$ to the moduli space is quasi-finite. If the Griffiths-Yukawa coupling length of the induced complex family $V_{\mathbb{C}} \to U_{\mathbb{C}}$ is one, then $U_K$ is $K$-analytically Borel hyperbolic.   
\end{thm}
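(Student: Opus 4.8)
The plan is to reduce to maps from curves via Theorem~\ref{K-testing} and then to verify the single hypothesis of Proposition~\ref{max-len-map}---that every nonconstant test map has maximal length of Griffiths-Yukawa coupling---using the coupling length being one together with quasi-finiteness. So first I would invoke Theorem~\ref{K-testing}: it suffices to show that for every smooth connected curve $C$ over $K$, every rigid analytic morphism $f:\,C^{\mathrm{an}}\to U_K^{\mathrm{an}}$ is algebraic. Constant maps are trivially algebraic, so I would fix a nonconstant $f$ and aim to apply Proposition~\ref{max-len-map}. The standing assumption of that proposition is free of charge here, since a quasi-finite classifying map is in particular generically finite onto its image. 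Thus the only thing left to check is that $f$ has maximal length of Griffiths-Yukawa coupling, i.e. that the composite (\ref{max-len}) is nonzero.

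Here I would use the hypothesis that the coupling length of the complex family is one. Because the Higgs maps $\tau^k$ and the sheaf $\mathcal{P}$ are purely algebraic data already defined over the finitely generated field $L$ of Section~\ref{spreading-out}, and their (non)vanishing is stable under the faithfully flat base changes $L\hookrightarrow\mathbb{C}$ and $L\hookrightarrow K$, the coupling length over $K$ is again one, so $m=1$ in the construction of (\ref{sym-diff}). For $m=1$ the composite (\ref{max-len}) reads
\[
T_{C^{\mathrm{an}}_K}\xrightarrow{df} f^*T_{Y_K}(-\log S_K)\to f^*\mathcal{P}^{\vee}_K,
\]
and since $\mathcal{P}^{\vee}_K$ is by construction the quotient of $T_{Y_K}(-\log S_K)$ through which $\tau^1_K$ factors (with generically injective second map to $F^{n-1,1}_K$), this composite is nonzero exactly when $\tau^1_K\circ df\not\equiv 0$. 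As $\tau^1_K$ is the (log) Kodaira-Spencer map, $\tau^1_K\circ df$ is precisely the Kodaira-Spencer map of the pulled-back family $f^*V_K\to C$, so (\ref{max-len}) is nonzero if and only if this family is non-isotrivial.

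The step I expect to be the crux is this non-isotriviality, and it is exactly where quasi-finiteness---as opposed to mere generic finiteness---is indispensable. Since the classifying map $\varphi_K:\,U_K\to\mathcal{M}_h\otimes K$ has finite fibers and $f$ is nonconstant, the composite $\varphi_K\circ f:\,C\to\mathcal{M}_h\otimes K$ cannot be constant: otherwise $f(C)$ would lie in a single finite fiber of $\varphi_K$, forcing $f$ to be constant. A nonconstant classifying map has nonzero derivative, hence nonzero Kodaira-Spencer map, so $\tau^1_K\circ df\not\equiv 0$ and $f$ has maximal length of Griffiths-Yukawa coupling. Had I only assumed generic finiteness, $f(C)$ could be swept into a positive-dimensional fiber, making the pulled-back family isotrivial and (\ref{max-len}) identically zero; this is precisely why the hypothesis is strengthened to quasi-finiteness.

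With the hypothesis of Proposition~\ref{max-len-map} now verified, that proposition yields the algebraicity of $f$, and Theorem~\ref{K-testing} then concludes that $U_K$ is $K$-analytically Borel hyperbolic. The one point demanding care in the full write-up is the identification of (\ref{max-len}) at $m=1$ with the Kodaira-Spencer map of $f^*V_K\to C$, together with the (generic) injectivity of $\mathcal{P}^{\vee}_K\hookrightarrow F^{n-1,1}_K$ that makes the kernel of $T_{Y_K}(-\log S_K)\to\mathcal{P}^{\vee}_K$ agree with that of $\tau^1_K$; everything else is a bookkeeping application of the cited results.
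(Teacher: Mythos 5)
Your reduction to curves and the appeal to Proposition~\ref{max-len-map} follow the paper, but your verification that (\ref{max-len}) is nonzero has a genuine gap, and it sits exactly where the paper's proof does something you omit. All the favorable properties of the Viehweg--Zuo data are only \emph{generic} on $Y_K$: the factorization $T_{Y_K}(-\log S_K)\to\mathcal{P}^{\vee}_K$ agrees with the Kodaira--Spencer map $\tau^1_K$ and is injective only over a dense Zariski open subset, and likewise the second arrow in (\ref{sym-diff}) is injective only over a Zariski open subset (Theorem~\ref{VZ-big-sheaf}). Since pullback of sheaves is not left exact, equality of the kernels of $T_{Y_K}(-\log S_K)\to \mathcal{P}^{\vee}_K$ and of $\tau^1_K$ \emph{as subsheaves on $Y_K$} gives no control after applying $f^*$ and composing with $df$: a generically injective map of sheaves can vanish identically along a proper closed subvariety (think of $\mathcal{O}\xrightarrow{\ t\ }\mathcal{O}$ on the affine line), so if $f(C^{\mathrm{an}}_K)$ happens to lie inside this bad locus, the composite (\ref{max-len}) can be identically zero even though the fibrewise Kodaira--Spencer classes of the pulled-back family are nonzero. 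Your pointwise chain (nonconstancy of $\varphi_K\circ f$ $\Rightarrow$ nonzero derivative at some point $\Rightarrow$ nonzero (\ref{max-len})) therefore does not go through; it also quietly invokes rigid-analytic deformation-theoretic facts (``nonconstant classifying map $\Rightarrow$ nonzero Kodaira--Spencer of the pulled-back rigid analytic family,'' plus compatibility of the fibrewise classes with the sheaf $F^{n-1,1}_K=R^1g_*T_{X/Y}(-\log\Delta)/\mathrm{torsion}$ under base change) which the paper never needs and which degenerate precisely on special loci.

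The paper's proof avoids all of this by first \emph{replacing $U_K$ by the Zariski closure of $f(C^{\mathrm{an}}_K)$}, desingularizing by Hironaka, and lifting $f$ to the smooth model; then the image of $f$ is Zariski dense in the new base, so it necessarily meets the dense open locus where the Viehweg--Zuo maps for the \emph{restricted} family are injective, and nonvanishing of (\ref{max-len}) follows from nonconstancy of $f$ together with generic injectivity of the Kodaira--Spencer map. This replacement step is also where quasi-finiteness is actually used: it guarantees that the family restricted to the Zariski closure still has maximal variation, so that Theorem~\ref{VZ-big-sheaf} applies to the restricted family. Your intuition for why quasi-finiteness rather than mere generic finiteness is needed is correct in spirit, but to repair the argument you need the closure-and-restriction device (or an equivalent mechanism forcing the image of $f$ to meet the good locus), not just pointwise nonconstancy of $\varphi_K\circ f$.
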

\begin{proof}
By Theorem~\ref{K-testing}, we consider a rigid analytic morphism $f$ from the rigid analytification of a smooth quasi-projective curve $C^{\mathrm{an}}_K$ to $U^{\mathrm{an}}_K$. By replacing $U_K$ by the Zariski closure of $f(C^{\mathrm{an}}_K)$, one can assume that the image of $f$ is Zariski dense. The family restricted to the new base has maximal variation as we assume that the classifying map is quasi-finite. After desingularizing the base space by Hironaka's theorem, we can assume that $U_K$ is smooth, and the family still has maximal variation. The rigid analytic morphism $f$ can be lifted to the smooth model since its image is not contained in the center of birational modifications.\\
Now since the Griffiths-Yukawa coupling length $m=1$, the diagram (\ref{max-len}) becomes
\[
T_{C^{\mathrm{an}}_K} \xrightarrow{df} f^*T_{Y_K}(-\log S_K) \to f^*\mathcal{P}^{\vee}_K.
\]
Note that the second map factors through from the Kodaira-Spencer map. Since the family has maximal variation, it is generically injective. Combining with the Zariski density of the image of $f$, we know that $f$ has maximal length of Griffiths-Yukawa coupling. Then we apply Proposition~\ref{max-len-map}.
\end{proof}

%\begin{cor}\label{Mg-Borel}
%Let $\mathcal{M}^{[N]}_g$ be the moduli space of genus $g$ curves with level $N$-structures ($g \geq 2$) which is defined over $\mathbb{Q}$. We denote by $\mathcal{M}^{[N]}_{g,K}$ its base change to the non-archimedean field $K$. Then $\mathcal{M}^{[N]}_{g,K}$ is $K$-analytically Borel hyperbolic for $N \geq 3$.  
%\end{cor} 
\begin{proof}[Proof of Theorem~\ref{thm-A}]
For $N \geq 3$, $\mathcal{M}^{[N]}_{g,K}$ is a fine moduli space and so we have a universal family over it.
By {\cite[Theorem~1.4, ii)]{VZ-2}}, the Griffiths-Yukawa coupling length of the family can be bounded from above by its fiber dimension, which is one for family of curves. Now we apply Theorem~\ref{main-thm}.  
\end{proof}
\begin{proof}[Proof of Theorem~\ref{Cor-B}]
By {\cite[Theorem~2.18]{Java-Vezz}}, we know that $\mathcal{M}^{[N]}_{g,K}$ is $K$-analytically Brody hyperbolic if and only if every rigid analytic morphism $\mathbb{G}^{\mathrm{an}}_{m,K} \to \mathcal{M}^{[N],\mathrm{an}}_{g,K}$ is constant and, for every abelian variety $B$ over $K$ with good reduction over $\mathcal{O}_K$, every morphism $B \to \mathcal{M}^{[N]}_{g,K}$ is constant.\\
We first check the statement about abelian varieties. Since one only needs to consider morphisms $B \to \mathcal{M}^{[N]}_{g,K}$, we are able to find some finitely generated subfield such that the morphism is defined over it. After embedding this finitely generated subfield  into $\mathbb{C}$ and the base change of the morphism, we get a morphism $B_{\mathbb{C}} \to \mathcal{M}^{[N]}_{g,\mathbb{C}}$, which has to be constant by the hyperbolicity of $\mathcal{M}^{[N]}_{g,\mathbb{C}}$. This forces the original morphism to be constant.\\
Next we check the statement about rigid analytic morphisms $\mathbb{G}^{\mathrm{an}}_{m,K} \to \mathcal{M}^{[N],\mathrm{an}}_{g,K}$. Since we have already proved that $\mathcal{M}^{[N]}_{g,K}$ is $K$-analytically Borel hyperbolic, every such rigid analytic morphism is actually the rigid analytification of some morphism $\mathbb{G}_{m,K} \to \mathcal{M}^{[N]}_{g,K}$. Then using the descent argument again, we obtain a morphism $\mathbb{G}_{m,\mathbb{C}} \to \mathcal{M}^{[N]}_{g,\mathbb{C}}$, which is forced to be constant by the hyperbolicity of $\mathcal{M}^{[N]}_{g,\mathbb{C}}$.  
\end{proof}
\begin{rmk}
It is worthwhile to mention that over complex numbers the moduli stack $\mathcal{M}_h$ of polarized complex smooth projective varieties with semi-ample canonical divisor and Hilbert polynomial $h$ is proven to be Borel hyperbolic recently in \cite{DLSZ}. One might expect the $K$-analytic Borel hyperbolicity to hold for more general moduli stacks over $K$.    
\end{rmk}


\begin{thebibliography}{GGLR19} 
	\providecommand{\url}[1]{\texttt{#1}}
	\providecommand{\urlprefix}{URL }

%\bibitem[AM]{AM}
%	Atiyah, M. F.; Macdonald, I. G.
%	\newblock \enquote{Introduction to commutative algebra.}  \emph{Addison-Wesley Publishing Co., Reading, Mass.-London-Don Mills, Ont.} 1969 ix+128 pp.
	%\newblock \urlprefix\url{}.

\bibitem[Ber90]{Ber90}
	Berkovich, Vladimir G.
	\newblock \emph{Spectral theory and analytic geometry over non-{A}rchimedean
              fields},  (American Mathematical Society, Providence, RI, 1990).
	%\newblock \urlprefix\url{}.


%\bibitem[Ber93]{Ber93}
%	---{}---{}---.
%	\newblock \enquote{\'{E}tale cohomology for non-{A}rchimedean analytic spaces.}
%	\newblock \emph{Inst. Hautes \'{E}tudes Sci. Publ. Math.} (1993) ~(78):
%	5--161.
%	\newblock \urlprefix\url{http://www.numdam.org/item?id=PMIHES_1993__78__5_0}.

	\bibitem[Bor72]{Borel}
	Armand Borel.
	\newblock \enquote{Some metric properties of arithmetic quotients of symmetric
		spaces and an extension theorem.}
	\newblock \emph{J. Differential Geometry} (1972) vol.~6: 543--560.
	\newblock \urlprefix\url{http://projecteuclid.org/euclid.jdg/1214430642}.
	\newblock Collection of articles dedicated to S. S. Chern and D. C. Spencer on
	their sixtieth birthdays.

	\bibitem[Bo14]{Bosch}
	Bosch, Siegfried.
	\newblock \emph{Lectures on formal and rigid geometry}, vol. 2105 of \emph{Lecture Notes in Mathematics},  (Springer-Verlag, 2014).
	\newblock \urlprefix\url{https://doi.org/10.1007/978-3-319-04417-0
}.


\bibitem[CP15]{CP}
Fr\'{e}d\'{e}ric Campana and Mihai P\u{a}un.
\newblock Orbifold generic semi-positivity: an application to families of
  canonically polarized manifolds.
\newblock {\em Ann. Inst. Fourier (Grenoble)}, 65(2):835--861, 2015.

\bibitem[Ch94]{Cherry94}
	Cherry, William.
	\newblock \enquote{Non-{A}rchimedean analytic curves in abelian varieties.}
	\newblock \emph{Math. Ann.} (1994) ~(300):
	393--404.
	\newblock \urlprefix\url{https://doi.org/10.1007/BF01450493}.



\bibitem[CR04]{Cherry-Ru}
	Cherry, William and Ru, Min.
	\newblock \enquote{Rigid analytic {P}icard theorems.}
	\newblock \emph{Amer. J. Math.} (2004) ~(126):
	873--889.
	\newblock \urlprefix\url{http://muse.jhu.edu/journals/american_journal_of_mathematics/v126/126.4cherry.pdf}.


\bibitem[CW02]{Cherry-Wang}
	Cherry, William and Wang, Julie Tzu-Yueh.
	\newblock \enquote{Non-{A}rchimedean analytic maps to algebraic curves.}
	\newblock In \enquote{Value distribution theory and complex dynamics ({H}ong {K}ong,
              2000),} vol. 303 of \emph{Contemp. Math.}, 7--35,  (Amer. Math. Soc., Providence, RI 2002).
	\newblock \urlprefix\url{https://doi.org/10.1090/conm/303/05235}.



\bibitem[Co99]{Con99}
	Conrad, Brian.
	\newblock \enquote{Irreducible components of rigid spaces.}
	\newblock \emph{Ann. Inst. Fourier (Grenoble)} \textbf{49} (1999), no. 2, 473--541.
	\newblock \urlprefix\url{http://www.numdam.org/item?id=AIF_1999__49_2_473_0}.


%\bibitem[Co08]{Conrad}
%	Conrad, Brian.
%	\newblock \enquote{Several approaches to non-{A}rchimedean geometry.}
%	\newblock In \enquote{{$p$}-adic geometry,} vol. 45 of \emph{Univ. Lecture Ser.}, 9--63,  (Amer. Math. Soc., Providence, RI 2008).
%	\newblock \urlprefix\url{https://doi.org/10.1090/ulect/045/02}.

	\bibitem[DLSZ]{DLSZ}
	Ya Deng, Steven Lu, Ruiran Sun and Kang Zuo.
	\newblock \enquote{Picard theorems for moduli spaces of polarized varieties.}
	(2020).
	\newblock ArXiv:1911.02973v3.

	\bibitem[FvP]{FvP}
	Fresnel, Jean; van der Put, Marius.
	\newblock Rigid analytic geometry and its applications. Progress in Mathematics, 218. \emph{Birkh\"auser Boston, Inc., Boston, MA,} 2004. xii+296 pp. ISBN: 0-8176-4206-4.
	\newblock \urlprefix\url{https://doi.org/10.1007/978-1-4612-0041-3}.

	\bibitem[Jav20]{Java20}
	Ariyan Javanpeykar.
	\newblock \enquote{The Lang-Vojta conjectures on projective pseudo-hyperbolic varieties.}
	(2020). \emph{CRM Short Courses Springer, to appear.}
	\newblock ArXiv:2002.11981.

	\bibitem[JKa20]{JK20}
	Ariyan Javanpeykar and Ljudmila Kamenova.
	\newblock \enquote{Demailly's notion of algebraic hyperbolicity: geometricity, boundedness, moduli of maps.}
        \newblock \emph{Math. Zeitschrift.} (2020)
	\newblock \urlprefix\url{https://doi.org/10.1007/s00209-020-02489-6}.

\bibitem[JK20]{Java-Kuch}
	Ariyan Javanpeykar and Robert~A. Kucharczyk.
	\newblock \enquote{Algebraicity of analytic maps to a hyperbolic variety.}
	\newblock \emph{Math. Nachrichten.} (2020) ~(293):
	1--15.
	\newblock \urlprefix\url{https://doi.org/10.1002/mana.201900098}.
	
	\bibitem[JSZ]{JSZ}
	Ariyan Javanpeykar, Ruiran Sun and Kang Zuo.
	\newblock \enquote{The Shafarevich conjecture revisited: Finiteness of pointed families of polarized varieties.}
	(2020).
	\newblock ArXiv:2005.05933.

	\bibitem[JV18]{Java-Vezz}
	Ariyan Javanpeykar and Alberto Vezzani.
	\newblock \enquote{Non-archimedean hyperbolicity and applications.}
	(2018).
	\newblock ArXiv:1808.09880.

	\bibitem[JX20]{JX19}
	Ariyan Javanpeykar and Junyi Xie.
	\newblock \enquote{Finiteness properties of pseudo-hyperbolic varieties.}
	(2020). \emph{IMRN, to appear.}
	\newblock ArXiv:1909.12187v2.

\bibitem[KL11]{KovacsLieblich}
S.J. Kov{\'a}cs and M.~Lieblich.
\newblock Erratum for {B}oundedness of families of canonically polarized
  manifolds: a higher dimensional analogue of {S}hafarevich's conjecture.
\newblock {\em Ann. of Math. (2)}, 173(1):585--617, 2011.


\bibitem[Lan86]{Lan86}
	Lang, Serge.
	\newblock \enquote{Hyperbolic and {D}iophantine analysis.}
	\newblock \emph{Bull. Amer. Math. Soc. (N.S.).}  (1986) vol. 14~(2): 159--205.
	\newblock \urlprefix\url{https://doi.org/10.1090/S0273-0979-1986-15426-1}.	
	
	\bibitem[Lu91]{Lu-91}
	Steven Shin-Yi Lu.
	\newblock \enquote{On meromorphic maps into varieties of log-general type.}
	\newblock In \enquote{Several complex variables and complex geometry, {P}art 2
		({S}anta {C}ruz, {CA}, 1989),} vol.~52 of \emph{Proc. Sympos. Pure Math.},
	305--333,  (Amer. Math. Soc., Providence, RI1991).

\bibitem[L\"u74]{Lut74}
	L\"utkebohmert, Werner.
	\newblock \enquote{Der Satz von Remmert-Stein in der nichtarchimedischen Funktionentheorie.}
	\newblock \emph{Math. Z.} \textbf{139} (1974), 69--84.
	\newblock \urlprefix\url{https://doi.org/10.1007/BF01194146}.


\bibitem[Ru01]{Ru01}
	Ru, Min.
	\newblock \enquote{A note on {$p$}-adic {N}evanlinna theory.}
	\newblock \emph{Proc. Amer. Math. Soc.} (2001) ~(129):
	1263--1269.
	\newblock \urlprefix\url{https://doi.org/10.1090/S0002-9939-00-05680-X}.	
	
	\bibitem[VZ02]{VZ-2}
	Eckart Viehweg and Kang Zuo.
	\newblock \enquote{Base spaces of non-isotrivial families of smooth minimal
		models.}
	\newblock In \enquote{Complex geometry ({G}\"ottingen, 2000),} 279--328,
	(Springer, Berlin2002).
	
	\bibitem[VZ03]{VZ-1}
	---{}---{}---.
	\newblock \enquote{On the {B}rody hyperbolicity of moduli spaces for
		canonically polarized manifolds.}
	\newblock \emph{Duke Math. J.} (2003) vol. 118~(1): 103--150.
	\newblock \urlprefix\url{http://dx.doi.org/10.1215/S0012-7094-03-11815-3}.
	

	
\end{thebibliography}
\end{document}